\numberwithin{equation}{section}
\newcommand\restr[2]{{% we make the whole thing an ordinary symbol
  \left.\kern-\nulldelimiterspace % automatically resize the bar with \right
  #1 % the function
  \vphantom{\big|} % pretend it's a little taller at normal size
  \right|_{#2} % this is the delimiter
  }}
\newcommand{\eps}{\varepsilon}
\newcommand{\R}{\mathbb{R}}
\DeclareMathOperator{\CAT}{CAT}
\newcommand{\N}{\mathbb{N}}
\newcommand{\dotto}{\, \cdot \,}
\declaretheorem[
	name=Theorem,
	numberwithin=section
	]{thm}
\declaretheorem[
	name=Lemma,
	numberlike=thm,
	]{lem}
\declaretheorem[
	name=Lemma,
	numbered=no,
	]{lem*}
\declaretheorem[
	name=Proposition,
	sibling=thm,
	]{prop}
\declaretheorem[
	name=Corollary,
	sibling=thm,
	]{cor}
\declaretheorem[
	name=Definition,
	style=definition,
        numberlike=thm,
	]{defin}
\declaretheorem[
	name=Remark,
	numberlike=thm,
        style=definition,
	]{rem}
\pgfplotsset{compat=1.15}
\title{A synthetic Lorentzian Cartan-Hadamard theorem}
\author{Darius Erös\thanks{{\tt darius.eroes@univie.ac.at}, Faculty of Mathematics, University of Vienna, Austria.} \ and Sebastian Gieger\thanks{{\tt sebastian.gieger@univie.ac.at}, Faculty of Mathematics, University of Vienna, Austria.}}
\date{\today}
\begin{document}

\definecolor{rvwvcq}{rgb}{0.08235294117647059,0.396078431372549,0.7529411764705882}
\definecolor{wrwrwr}{rgb}{0.3803921568627451,0.3803921568627451,0.3803921568627451}
\definecolor{sexdts}{rgb}{0.1803921568627451,0.49019607843137253,0.19607843137254902}

\maketitle
\vspace{-0.5em}
\begin{abstract}
We formulate and prove a synthetic Lorentzian Cartan-Hadamard theorem. This result both transfers the corresponding statement for locally convex metric spaces established by S.\ Alexander and R.\ Bishop to the Lorentzian setting, and simultaneously generalizes the smooth Lorentzian theorem discussed by J.\ Beem and P.\ Ehrlich to the recently established framework of synthetic Lorentzian geometry. Our approach is based on an appropriate notion of local concavity for Lorentzian (pre-)length spaces, which allows us to establish existence and uniqueness of timelike geodesics between any pair of timelike related points under the additional assumptions of global hyperbolicity and future one-connectedness. We also provide a globalization result for our notion of concavity in the setting of Lorentzian length spaces, and apply our results to obtain a globalization statement for nonnegative upper timelike curvature bounds.

\medskip
\noindent
\emph{Keywords:} Cartan-Hadamard theorem, metric geometry, Lorentzian geometry, Lorentzian length spaces, synthetic curvature bounds, concavity, globalization
\medskip

\noindent
\emph{MSC2020:} 53C50, 53C23, 53B30, 51K10
\end{abstract}

\renewcommand{\baselinestretch}{0.75}\normalsize
\tableofcontents
\renewcommand{\baselinestretch}{1.0}\normalsize
\newpage
\section{Introduction}
The Cartan-Hadamard theorem, as a typical local-to-global result, describes the structure of complete spaces of non-positive sectional curvature. In its classical form, it reads:

\begin{thm}[Cartan-Hadamard]
    Let $(M,g)$ be a complete and connected $n$-dimensional Riemannian manifold with non-positive sectional curvature. Then its universal cover is diffeomorphic to $\R^n$.
\end{thm}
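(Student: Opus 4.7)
The plan is to fix a basepoint $p \in M$ and study the exponential map $\exp_p \colon T_pM \to M$, showing that it serves as the universal covering. By the Hopf–Rinow theorem, completeness of $(M,g)$ guarantees that $\exp_p$ is defined on all of $T_pM$, so the domain issue is settled immediately. The strategy is then to prove that $\exp_p$ is a local diffeomorphism everywhere, pull the metric $g$ back along $\exp_p$ to obtain a complete Riemannian structure on $T_pM \cong \R^n$, and finally invoke a standard covering-space criterion to conclude that $\exp_p$ is the universal cover.

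First I would show that $\exp_p$ has no critical points. The derivative $d(\exp_p)_v$ at $v \in T_pM$ is computed via Jacobi fields along the geodesic $\gamma(t) = \exp_p(tv)$: specifically, the Jacobi field $J$ along $\gamma$ with $J(0)=0$ and $J'(0)=w$ satisfies $J(1) = d(\exp_p)_v(w)$ (under the canonical identification $T_v(T_pM) \cong T_pM$). Non-positive sectional curvature forces $\tfrac{d^2}{dt^2}\|J(t)\|^2 \geq 2\|J'(t)\|^2 \geq 0$, so $\|J\|^2$ is a convex function vanishing only at $t=0$, hence $J(1) \neq 0$ whenever $w \neq 0$. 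This shows $d(\exp_p)_v$ is injective, and by dimension count it is an isomorphism, so $\exp_p$ is a local diffeomorphism.

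Next, I would endow $T_pM$ with the pulled-back metric $\tilde g := \exp_p^* g$, which is a bona fide Riemannian metric since $\exp_p$ is a local diffeomorphism, and with respect to which $\exp_p$ becomes a local isometry by construction. Radial lines $t \mapsto tv$ through the origin are geodesics of $\tilde g$, because $\exp_p$ maps them to geodesics of $g$ and is a local isometry; since they are defined for all time, $(T_pM, \tilde g)$ is geodesically complete at the origin, and Hopf–Rinow again upgrades this to full completeness.

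The final step is to invoke the theorem that a local isometry $f\colon (N,\tilde g) \to (M,g)$ from a complete connected Riemannian manifold to a connected Riemannian manifold is automatically a covering map (the key point being that geodesics of $g$ can be lifted to geodesics of $\tilde g$ by completeness, giving path-lifting). Applied here, $\exp_p$ is a covering map, and since $T_pM$ is simply connected it realizes the universal cover, yielding a diffeomorphism $\widetilde{M} \cong T_pM \cong \R^n$. I expect the main obstacle to be the Jacobi-field comparison producing a nonvanishing derivative — all the other ingredients are soft topology or Hopf–Rinow — and indeed it is precisely at this step that the non-positivity of the sectional curvature is used.
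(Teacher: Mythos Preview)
Your argument is the standard textbook proof of the classical Riemannian Cartan--Hadamard theorem and is correct as written: the Jacobi-field convexity argument shows $\exp_p$ is a local diffeomorphism, the pullback metric is complete, and the complete-local-isometry-implies-covering lemma finishes it.

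There is, however, nothing to compare against: the paper does not prove this statement. Theorem~1.1 is quoted in the introduction purely as historical background and motivation, alongside the metric version (Theorem~\ref{thm:MetricCH}) and the smooth Lorentzian version (Theorem~\ref{thm:SmoothLorCH}), none of which are proved in the paper. The paper's actual contribution is the \emph{synthetic Lorentzian} analogue (Theorem~\ref{C-H}), whose proof proceeds by entirely different means --- there is no exponential map, no Jacobi fields, and no covering-space argument; instead one works directly with the local concavity of the time separation function to patch together geodesics and establish uniqueness via timelike homotopies. So your proof is fine for what it is, but it addresses a theorem the paper only cites rather than proves.
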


In the 1980s, a far-reaching generalization to the class of complete and locally convex geodesic spaces was proposed by Mikhail Gromov, see \cite{GromovI}, \cite{GromovII}. To see how (local) convexity enters the picture, observe that any metric $d$ of a $\CAT(0)$ space $(X,d)$ is \textit{convex} in the sense that
\begin{equation*}
    d(\gamma_1(t), \gamma_2(t)) \leq (1-t) d(\gamma_1(0), \gamma_2(0)) + td(\gamma_1(1), \gamma_2(1))
\end{equation*}
for any two minimizing geodesics $\gamma_1,\gamma_2$ and all $t \in [0,1]$ by a simple application of the intercept theorem in the Euclidean plane. In fact, for a Riemannian manifold $(M,g)$ with its induced distance $d_g$, the convexity of $d_g$ is \textit{equivalent} to an upper sectional curvature bound on $g$ by zero, cf.\ \cite[Rmk.\ 1A.8]{bridson-haefliger}. However, for the more general class of Finsler manifolds, convexity of the distance metric is a \textit{strictly weaker} restriction on the geometry of the space, see \cite{LytchakIvanov}. This can already be observed at the level of linear spaces: Any normed space satisfying the $\CAT(0)$ inequality is automatically a (pre-)Hilbert space \cite[Prop.\ 1.14]{bridson-haefliger}.

This convexity assumption, first studied by Busemann and therefore also called \textit{Busemann convexity}, lies at the heart of a number of important theorems on the geometry of $\CAT(0)$ spaces. 
In particular, it allowed S.\ Alexander and R.\ Bishop to prove the aforementioned generalization of the Cartan-Hadamard theorem stated by M.\ Gromov \cite{AlexanderBishop}, while W.\ Ballmann provided a proof for the locally compact case \cite{BallmannPaper}. Following \cite[Thm.\ 4.1]{bridson-haefliger}, a generalized Cartan-Hadamard theorem reads:

\begin{thm}[Metric Cartan-Hadamard]\label{thm:MetricCH}
    Let $(X,d)$ be a complete and connected metric space. If $d$ is locally convex, then any pair of points in its universal cover $\tilde X$ is connected by a unique geodesic, and these geodesics vary continuously with their endpoints.
\end{thm}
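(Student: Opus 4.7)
The plan is to work entirely on the universal cover $\tilde X$ equipped with the lifted length metric $\tilde d$, so that the covering projection $\pi : \tilde X \to X$ is a local isometry. Under this projection, local convexity of $d$ lifts to local convexity of $\tilde d$, and completeness lifts as well. Thereafter the argument splits into three parts: local existence and uniqueness of geodesics in small balls, global existence of a connecting geodesic via a straightening procedure, and global uniqueness together with continuous dependence on endpoints, the last being the step that genuinely exploits simple connectedness of $\tilde X$.

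For the local picture, local convexity together with completeness implies that around each $\tilde x \in \tilde X$ there is a radius $r(\tilde x) > 0$ such that any two points of the open ball $B(\tilde x, r(\tilde x))$ are joined by a unique minimizing geodesic depending continuously on its endpoints. Uniqueness in such balls is immediate from the convexity inequality: if $\gamma_1, \gamma_2$ were two distinct local geodesics with the same endpoints, then $t \mapsto \tilde d(\gamma_1(t), \gamma_2(t))$ would be convex and vanish at $t = 0, 1$, hence identically zero.

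Given $\tilde p, \tilde q \in \tilde X$, I would next pick any continuous path $c : [0,1] \to \tilde X$ joining them, subdivide $[0,1]$ finely enough that each image $c([t_i, t_{i+1}])$ lies in a convex ball, and replace each subarc by its unique local geodesic to obtain a broken geodesic. An iterative corner-straightening, where at each vertex one replaces two adjacent segments by the unique local geodesic between their outer endpoints, produces a length-decreasing sequence of broken geodesics; a compactness and completeness argument then forces convergence to a genuine geodesic from $\tilde p$ to $\tilde q$.

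For uniqueness, given two geodesics $\gamma_0, \gamma_1$ from $\tilde p$ to $\tilde q$, simple connectedness of $\tilde X$ yields a homotopy rel endpoints between them. The idea is to cover this homotopy with a fine grid of convex balls and inductively construct a continuous one-parameter family $s \mapsto \gamma_s$ of geodesics from $\tilde p$ to $\tilde q$; for nearby parameters $s, s'$, the two geodesics lie sufficiently close that $t \mapsto \tilde d(\gamma_s(t), \gamma_{s'}(t))$ is convex and vanishes at the endpoints, hence vanishes identically, so the family is in fact constant and $\gamma_0 = \gamma_1$. Continuous dependence of the resulting geodesic on its endpoints follows from the same convexity estimate, now applied to geodesics whose endpoints are perturbed rather than whose underlying homotopy is varied. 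The main obstacle is the combinatorial bookkeeping of the straightening and homotopy-through-geodesics arguments, specifically showing that the successive replacements can be carried out with uniform control on ball radii so that both the straightened broken geodesic converges and the one-parameter family $\gamma_s$ varies continuously in $s$ along all of $[0,1]$.
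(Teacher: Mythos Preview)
The paper does not prove \autoref{thm:MetricCH}. It is quoted in the introduction as motivational background, with attribution to Alexander--Bishop, Ballmann, and the formulation in \cite[Thm.\ 4.1]{bridson-haefliger}; no argument for it is given anywhere in the text. So there is no ``paper's own proof'' to compare against.

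That said, your outline is essentially the classical Alexander--Bishop/Ballmann proof as presented in Bridson--Haefliger, and it is also structurally parallel to what the present paper carries out in the Lorentzian setting (Sections~\ref{sec:LocalExistence} and~\ref{sec:Globalization}): local uniqueness from the convexity inequality corresponds to \autoref{uniquemaxlocal}; your corner-straightening existence argument corresponds to the maximizing-tuple construction in the proof of \autoref{großesThm}; and your homotopy-through-geodesics uniqueness argument corresponds to \autoref{globlem} and \autoref{geodhomotop}. One substantive difference worth flagging: in the metric case the existence step is typically not done by iterated corner-rounding as you describe, but rather by first proving a ``development lemma'' (any local geodesic has a neighborhood in which nearby endpoints are joined by a unique local geodesic, varying continuously) and then propagating along an arbitrary path---exactly the pattern of \autoref{großesThm} followed by \autoref{globlem} here. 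Your straightening scheme can be made to work, but the convergence step is where the real analytic content lies, and your proposal does not yet indicate how completeness alone (without local compactness) forces the broken geodesics to converge; this is precisely the point at which Alexander--Bishop's argument goes beyond Ballmann's locally compact case.
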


In addition to these developments, a version of the Cartan-Hadamard theorem has been put forward, which holds for smooth Lorentzian manifolds, see \cite[Ch.\ 11.3]{BeemEhrlich}. In this setting, one is concerned with timelike curves, and so statements about the geometry of the universal cover cannot be expected. Instead, one assumes the Lorentzian manifold to be future one-connected, meaning that any two timelike curves connecting the same endpoints are homotopic through a family of timelike curves. Moreover, the completeness assumption is substituted by global hyperbolicity, which then finally leads to a smooth Lorentzian Cartan-Hadamard theorem of the following form, cf.\ \cite[Prop.\ 11.13,  Thm. 11.16]{BeemEhrlich}:

\begin{thm}[Smooth Lorentzian Cartan-Hadamard]\label{thm:SmoothLorCH}
    Let $(M,g)$ be a future one-connected globally hyperbolic spacetime with non-negative timelike sectional curvature. Then, given any $p,q \in M$ with $p \ll q$, there is exactly one future-directed timelike geodesic from $p$ to $q$.
\end{thm}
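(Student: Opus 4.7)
The overall strategy is to show that the restriction of the exponential map at $p$ to the future-timelike cone in $T_pM$ is a diffeomorphism onto the chronological future $I^+(p)$. The curvature hypothesis will provide the local diffeomorphism property (absence of timelike conjugate points), global hyperbolicity will yield both surjectivity and the compactness needed to upgrade this to a covering, and future one-connectedness will finally trivialize the cover.

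For existence of a future-directed timelike geodesic from $p$ to $q$, I would invoke the Avez--Seifert theorem: in a globally hyperbolic spacetime, the causal diamond $J^+(p) \cap J^-(q)$ is compact and the Lorentzian length functional is upper semicontinuous on the $C^0$-space of causal curves between the endpoints, yielding a length-maximizer which is automatically a smooth future-directed timelike geodesic. For the local structure at $p$, the Lorentzian Rauch comparison theorem, applied under non-negative timelike sectional curvature, forbids timelike conjugate points along any future-directed timelike geodesic emanating from $p$. Consequently $\exp_p$, restricted to the future-timelike cone in $T_pM$, is a local diffeomorphism onto $I^+(p)$.

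Next I would promote this local diffeomorphism to a covering map. Surjectivity is immediate from Avez--Seifert. The covering property requires path-lifting: for any future-directed timelike curve $c:[0,1] \to I^+(p)$ with a lift $\tilde{c}$ defined on $[0,t^*)$, one has to rule out that $\tilde{c}$ escapes as $t \to t^*$. This is where global hyperbolicity becomes essential: compactness of causal diamonds together with upper semicontinuity of Lorentzian arclength forces $\tilde{c}$ to stay within a compact subset of the timelike cone, so the lift extends continuously to $t=t^*$ and can be continued. I expect this compactness-and-lifting step, where one must work carefully up to the boundary of the timelike cone, to be the main technical obstacle.

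Finally I would invoke future one-connectedness. Suppose $\gamma_0,\gamma_1$ are two future-directed timelike geodesics from $p$ to $q$ with distinct initial velocities $v_0,v_1 \in T_pM$. By hypothesis they are connected through a timelike homotopy $H:[0,1]^2 \to M$ such that each $H(s,\cdot)$ is a future-directed timelike curve from $p$ to $q$. Lifting $H$ via the covering $\exp_p$ to a continuous $\tilde{H}:[0,1]^2 \to T_pM$ with $\tilde{H}(s,0)=0$, the endpoint map $s \mapsto \tilde{H}(s,1)$ takes values in the discrete fiber $\exp_p^{-1}(q)$ and is therefore constant. Thus $v_0 = v_1$ and hence $\gamma_0 = \gamma_1$, proving uniqueness.
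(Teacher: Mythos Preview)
The paper does not give its own proof of this statement; it is quoted from \cite[Prop.\ 11.13, Thm.\ 11.16]{BeemEhrlich} as background for the synthetic generalization that follows. Your outline is essentially the classical Beem--Ehrlich argument and is sound at the level of a sketch: Avez--Seifert for existence, absence of timelike conjugate points from the curvature hypothesis (so that $\exp_p$ is a local diffeomorphism on the open future cone), path-lifting of timelike curves via compactness of causal diamonds, and finally the homotopy-lifting step to force the endpoint in the fibre $\exp_p^{-1}(q)$ to be constant. The point you flag as delicate---preventing the lift from escaping to infinity or to the null boundary of the cone---is indeed where the genuine work sits, and one also has to handle the apex $0\in T_pM$ separately since the covering description only applies on the open cone; both issues are routine in the smooth setting.

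It is worth contrasting this with what the paper actually does for its \emph{synthetic} version (\autoref{C-H}), where no exponential map is available. There, existence and local uniqueness near a given geodesic are obtained by a discrete patching construction: one covers the geodesic by a chain of small comparison diamonds, maximizes a finite sum of $\tau$-distances over admissible tuples, and uses the concavity inequality repeatedly to pin down the resulting broken geodesic (\autoref{großesThm}). Global uniqueness is then propagated along timelike homotopies by an open--closed argument (\autoref{globlem}, \autoref{geodhomotop}) rather than by lifting to a tangent space. Your exponential-map route is shorter and more conceptual in the smooth category, but it relies on differential structure that does not survive the passage to Lorentzian length spaces---which is precisely why the paper needs the alternative machinery.
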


It is the purpose of this article to achieve a synthesis between \autoref{thm:MetricCH} and \autoref{thm:SmoothLorCH}, i.e., to state and prove a \emph{synthetic Lorentzian} Cartan-Hadamard theorem. This is in line with a broader effort to extend general relativity beyond the (smooth) spacetime setting, which has seen a surge of activity in recent years with the introduction of a synthetic framework for Lorentzian geometry. The core notion therein is that of a Lorentzian length space \cite{LLS}, which is a Lorentzian analog of a metric length space, whose geometry is captured by its (intrinsic) time separation function. Our main result is:

\begin{thm}[Synthetic Lorentzian Cartan-Hadamard]
    Let $X$ be a globally hyperbolic regular Lorentzian length space. If $X$ is locally concave and future one-connected, then every pair of timelike related points is connected by a unique timelike geodesic, and these geodesics vary continuously with their endpoints. 
\end{thm}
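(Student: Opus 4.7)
The plan is to transport the Alexander-Bishop strategy for the metric Cartan-Hadamard theorem to the Lorentzian setting, with local concavity replacing local convexity, future one-connectedness replacing simple connectedness, and global hyperbolicity replacing metric completeness. The proof splits into existence, local uniqueness, propagation of uniqueness along timelike homotopies, and continuous dependence on endpoints.

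First, I would dispatch \emph{existence}. Since $X$ is a regular, globally hyperbolic Lorentzian length space, the Lorentzian analog of the Avez-Seifert theorem for such spaces guarantees that every timelike related pair $p \ll q$ is joined by at least one future-directed maximizing timelike geodesic. Next, \emph{local uniqueness} in a concavity neighborhood $U$ should follow directly from the local concavity of the time separation: given two maximizing timelike geodesics $\gamma_0, \gamma_1$ from $p$ to $q$ lying in $U$, the appropriate concavity inequality applied with both endpoints pinned forces the gap between the two curves to collapse along all intermediate parameters. Concretely, one would argue by contradiction, using a midpoint construction together with the reverse triangle inequality to produce a competitor of strictly greater length, contradicting maximality of $\gamma_0$ (or $\gamma_1$).

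The heart of the proof is the \emph{globalization} of uniqueness, for which future one-connectedness is essential. Given two timelike geodesics $\gamma_0, \gamma_1$ from $p$ to $q$, future one-connectedness supplies a homotopy $H \colon [0,1]^2 \to X$ through future-directed timelike curves with $H(0,\cdot) = \gamma_0$, $H(1,\cdot) = \gamma_1$ and fixed endpoints. Since $X$ is globally hyperbolic, the image of $H$ is contained in a compact causal diamond and may be covered by finitely many concavity neighborhoods. I would then subdivide the homotopy parameter finely enough so that consecutive curves $H(s,\cdot)$ and $H(s',\cdot)$ both lie inside a common concave neighborhood, and examine the set
\begin{equation*}
A \;=\; \{\, s \in [0,1] : \text{the unique maximizing geodesic from } p \text{ to } q \text{ through the strip up to } s \text{ coincides with } \gamma_0 \,\}.
\end{equation*}
Using local uniqueness inside each concave neighborhood, $A$ should be open; using uniform convergence of geodesics inside compact causal diamonds, $A$ should be closed; since $0 \in A$ trivially, connectedness of $[0,1]$ forces $A = [0,1]$ and hence $\gamma_1 = \gamma_0$. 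Alternatively, one can first establish the globalization of concavity promised in the abstract and derive uniqueness in one stroke from the global concavity inequality applied to $\gamma_0$ and $\gamma_1$ with common endpoints.

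For \emph{continuous dependence}, I would take sequences $p_n \to p$ and $q_n \to q$ with $p_n \ll q_n$, extract by global hyperbolicity a uniformly converging subsequence of the corresponding unique maximizing geodesics $\gamma_n$, and identify the limit, by upper semicontinuity of Lorentzian length and uniqueness, with the unique geodesic from $p$ to $q$. The main obstacle I anticipate is the globalization step: the Lorentzian time separation is asymmetric and satisfies only a reverse triangle inequality, so the classical Alexander-Bishop subdivision argument must be re-engineered to respect the causal structure. In particular, one must verify that the timelike homotopy supplied by future one-connectedness can indeed be refined compatibly with the concavity neighborhoods, and that the notion of two timelike curves being ``close'' which one uses for propagation is consistent with both the causal order and the concavity inequality.
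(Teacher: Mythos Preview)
Your overall architecture---existence via Avez--Seifert, local uniqueness from concavity, propagation along a timelike homotopy via an open/closed argument, and continuous dependence via the limit curve theorem---matches the paper's. The gap is in the propagation step, and it is precisely the technical heart of the paper.

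You write that you would ``subdivide the homotopy parameter finely enough so that consecutive curves $H(s,\cdot)$ and $H(s',\cdot)$ both lie inside a common concave neighborhood''. But each $H(s,\cdot)$ is a timelike curve running all the way from $p$ to $q$; unless $p$ and $q$ are already close, no such curve fits into a single comparison neighborhood, no matter how finely you subdivide in $s$. Local uniqueness in the sense of \autoref{uniquemaxlocal} therefore cannot be applied directly to compare two long geodesics. Your alternative route through global concavity is circular: in the paper, global concavity (\autoref{globalconcav}) is deduced \emph{from} uniqueness, not the other way around.

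What the paper supplies, and what your outline lacks, is a synthetic ``no conjugate points'' result (\autoref{großesThm}): around any given timelike geodesic $\alpha$ there is a tubular neighborhood $U$, built by chaining together comparison diamonds along $\alpha$, in which the geodesic between nearby endpoints is unique \emph{within $U$} and depends continuously on those endpoints. Establishing this requires subdividing $\alpha$ into segments lying in successive comparison neighborhoods, maximizing over admissible broken geodesics, and repeatedly invoking concavity to control where the pieces land---this is where the Alexander--Bishop machinery is genuinely re-engineered for the causal setting. With this tubular uniqueness in hand, the paper first straightens the arbitrary timelike homotopy $H$ into a homotopy through \emph{geodesics} (\autoref{geodhomotop}, via \autoref{globlem}), and only then runs the open/closed argument you describe: each geodesic in the family has a tubular neighborhood in which it is the unique geodesic from $p$ to $q$, so the family is locally constant and hence constant. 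Your sketch jumps from comparison-neighborhood uniqueness to the open/closed argument without this intermediate ``thickening'' of local uniqueness along a long geodesic; supplying it is the main work.
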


In fact, we will provide a slight generalization of the above to the setting of so-called Lorentzian \emph{pre-}length spaces, see \autoref{großesThm}. Moreover, as in the metric setting, this result will also enable us to globalize a upper curvature bound by zero.

This article is structured as follows: In \autoref{sec:Preliminaries}, we start by recalling basic definitions pertaining to the theory of Lorentzian length spaces, and fix notations and conventions before introducing a precise definition of local concavity. This definition will be compared to other notions of curvature bounds, and we will use it to show uniqueness of geodesics within comparison neighborhoods (\autoref{uniquemaxlocal}). \\
In \autoref{sec:LocalExistence}, this result will then be extended to an entire neighborhood of a given geodesic (\autoref{großesThm}). This is essentially a synthetic analog of the fact that in a Lorentzian manifold with timelike sectional curvature bounded from above by $0$, timelike geodesics do not encounter conjugate points \cite[10.20.\ Cor.]{oneil}.\\
Finally, in \autoref{sec:Globalization}, we will provide various applications of \autoref{großesThm}. First, we will show that geodesics within a comparison neighborhood vary continuously with their endpoints (\autoref{contvargeod}). Then we will establish a technical lemma guaranteeing that, under suitable assumptions, whenever two comparison neighborhoods are connected by a timelike geodesic, we can define continuously varying geodesics between any two points in these neighborhoods in a unique way (\autoref{globlem}). This will be the crucial step in showing that under the same assumptions, any two timelike related points in a locally concave Lorentzian pre-length space can be connected by a timelike geodesic. Furthermore, if we assume our space to be future one-connected, these geodesics are even unique and vary continuously with their endpoints, which is precisely the Lorentzian version of the Cartan-Hadamard Theorem we set out to prove. Finally, under similar assumptions, we provide globalization results for our notion of concavity as well as for upper curvature bounds as defined in \cite{curvature}. 

\vspace{-0.5em}
\section{Preliminaries}\label{sec:Preliminaries}
\vspace{-0.5em}
Over the last few years, several frameworks have been developed to allow for a synthetic conceptualization of Lorentzian geometry in the spirit of metric geometry (cf.\ \cite{LLS}, \cite{Olaf}, \cite{BraunMcCann}, \cite{MinguzziSuhr}). For the purposes of this article, we will assume basic familiarity with the theory of Lorentzian length spaces as introduced in \cite{LLS}. Nonetheless, in order to avoid ambiguity, we will briefly recall its central definitions in this section. By a \textit{Lorentzian pre-length space} we mean a $5$-tuple $(X,d,\ll,\leq,\tau)$ consisting of a metric space $(X,d)$, a timelike relation $\ll$ included in a causal relation $\leq$ and a non-negative, lower semicontinuous function $\tau$, satisfying
$$\tau(x,z)\geq\tau(x,y)+\tau(y,z)\hspace{1cm}\text{for all }x\leq y\leq z\in X$$
and
$$\tau(x,y)>0\hspace{0.5cm}\text{if and only if}\hspace{0.5cm}x\ll y.$$
The function $\tau \colon X \times X \rightarrow[0,+\infty]$ is called the \textit{time separation function} of the Lorentzian pre-length space $X$. Throughout this article, we will often impose additional regularity assumptions on $X$, such as being locally causally closed, timelike or causally path-connected, and (regularly) localizable. Precise definitions of these terms can be found in \cite{LLS}. 

A Lorentzian pre-length space $X$ is called \textit{strongly causal} if the timelike diamonds $I(x,y):=I^+(x)\cap I^-(y)$ form a subbasis of the topology induced by $d$. It will be called \textit{globally hyperbolic} if all causal diamonds $J(x,y):=J^+(x)\cap J^-(y)$ are compact and for every compact subset $K\subseteq X$ there exists a constant $C(K)$ such that the $d$-arc length of all causal curves contained in $K$ is bounded by $C(K)$. Here, a non-constant curve $\gamma \colon I \rightarrow X$, defined on some interval $I \subseteq \R$, is said to be \emph{future-directed causal} if $\gamma$ is locally Lipschitz (w.r.t.\ $d$) and $\gamma(t_1) \leq \gamma(t_2)$, whenever $t_1 < t_2$. Future- and past-directed timelike curves are introduced analogously. Note that global hyperbolicity only implies strong causality under additional assumptions (see \cite[Thm.\ 3.26]{LLS}).

We define the $\tau$-length of a causal curve $\gamma$ as $L(\gamma):=\inf\{\sum_{i=1}^N\tau(\gamma(t_{i-1}),\gamma(t_i))\}$, where the infimum is taken over all partitions $t_0<\dots<t_N$ of the domain of $\gamma$. Under some mild assumptions, this length functional is sequentially upper semicontinuous with respect to uniform convergence, cf.\ \cite[Prop.\ 3.17]{LLS}. If a Lorentzian pre-length space $X$ is locally causally closed, causally path-connected as well as localizable, and its time separation function $\tau$ is given by
$$\tau(x,y)=\sup\{L(\gamma)\,\vert\,\text{$\gamma$ future-directed causal from $x$ to $y$}\},$$
we call $X$ a \textit{Lorentzian length space}. A future-directed causal curve $\gamma$ from $x$ to $y$ with $L(\gamma) = \tau(x,y)$ is called a \textit{maximizer}. Any timelike maximizer $\gamma$ possesses a reparametrization $\tilde\gamma\colon [0,1]\to X$ such that $\tau(\tilde\gamma(0),\tilde\gamma(t))=Ct$ for some constant $C>0$ and all $t\in[0,1]$. Except when the parametrization is explicitly mentioned, we will always tacitly assume timelike maximizers to be parametrized in this way, and whenever we make statements about the uniqueness of maximizers, this is to be understood up to the choice of a reparametrization. Finally, by a \textit{geodesic}, we mean a causal curve which is locally maximizing, i.e., $\gamma\colon [0,1]\to X$ is a geodesic if every $t\in[0,1]$ possesses a neighborhood $I\subseteq[0,1]$ such that $\gamma\big|_I$ is a maximizer.

These concepts allow us to introduce curvature bounds on Lorentzian pre-length spaces. The notion of curvature bound we will be using throughout this article is formulated as a local concavity condition of the time separation:

\begin{defin}\label{def:Concavity}
    A Lorentzian pre-length space $(X,d,\ll,\leq,\tau)$ is called \textit{locally concave} if every point in $X$ possesses a neighborhood $U$ such that:
    \begin{enumerate}[label=(\roman*)]
        \item $\tau\big|_{U\times U}$ is continuous.\label{concav1}
        \item For all $x,y\in U$ with $x\ll y$, there exists a future-directed causal curve $\alpha$ connecting $x$ to $y$ \textit{within} $U$ which satisfies $L(\alpha)=\tau(x,y)$.\label{concav2}
        \item Along any pair of timelike maximizers $\alpha,\beta\colon[0,1]\to U$ with the same time orientation satisfying $\alpha(0)\ll\beta(0)$ or $\alpha(0)=\beta(0)$ and $\alpha(1)\ll\beta(1)$ or $\alpha(1)=\beta(1)$, we have
        \begin{equation*}\label{concav3}
            \tau(\alpha(t),\beta(t))\geq t\tau(\alpha(1),\beta(1))+(1-t)\tau(\alpha(0),\beta(0)) \quad \textnormal{for all } t \in [0,1].
        \end{equation*}
    \end{enumerate}
    Any neighborhood satisfying these conditions will be referred to as a \textit{comparison neighborhood}. 
\end{defin}

\begin{rem}\label{felixprop}
This definition should be compared to other definitions of curvature bounds for Lorentzian pre-length spaces. For a summary of such notions and their interrelations, see \cite{curvature}. Clearly, items \ref{concav1} and \ref{concav2} in \autoref{def:Concavity} are very similar to the respective items in the curvature conditions laid out in \cite{curvature}. Item \ref{concav3}, however, is distinctly different from all other concepts proposed so far.
To shed some light on this condition, we observe that any Lorentzian pre-length space with curvature bounded from above by $0$ in the sense of timelike triangle comparison is locally concave. Indeed, \cite[Prop.\ 6.1]{curvature} ensures that in a space of curvature bounded from above by zero, a curvature comparison neighborhood also constitutes a comparison neighborhood in the sense of Definition \ref{def:Concavity}. Note, however, that this proposition assumes curvature bounds in the sense of \textit{strict causal} triangle comparison, which is not equivalent to timelike triangle comparison in general (cf.\ \cite[Thm.\ 5.1]{curvature}). Nevertheless, timelike triangle comparison turns out to be enough to prove concavity of $\tau$ in our sense as the additional assumption is only needed in the case where the endpoints of the geodesics $\alpha$ and $\beta$ are causally related but not timelike related, a constellation which is not included in \autoref{def:Concavity}. The reason we exclude this case is that one of the principal aims of this article is to show that local concavity globalizes, i.e., that under suitable assumptions, the entire space $X$ can serve as a comparison neighborhood (see \autoref{globalconcav}). The proof of this globalization does not work if the endpoints are merely causally but not timelike related; hence, to remain consistent with what we mean by a comparison neighborhood, we will only impose concavity for maximizers whose endpoints are timelike related or coincide.

\end{rem}

To be able to show global existence and uniqueness of geodesics, we first consider the case in which the two timelike related points are contained in a common comparison neighborhood. 

\begin{prop}\label{uniquemaxlocal}
    Let $X$ be a strongly causal and locally concave regular Lorentzian pre-length space. If $x,y \in X$ are both contained in a comparison neighborhood $U$ and $x\ll y$, then there exists a unique future-directed timelike geodesic from $x$ to $y$ contained entirely in $U$.
\end{prop}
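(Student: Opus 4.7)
The plan is to separate the proof into existence and uniqueness, with existence being essentially immediate from the comparison-neighborhood axioms and uniqueness being the substantive part.

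For existence, I would simply invoke condition \ref{concav2} of \autoref{def:Concavity}: since $x \ll y$ with both points in the comparison neighborhood $U$, there is a future-directed causal curve $\alpha\colon [0,1]\to U$ from $x$ to $y$, contained in $U$, with $L(\alpha)=\tau(x,y)>0$. Regularity of $X$ forces causal maximizers to be either purely timelike or purely null; the positivity of $L(\alpha)$ rules out the null case, so $\alpha$ is a future-directed timelike maximizer and hence a geodesic.

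For uniqueness, let $\alpha,\beta\colon [0,1]\to U$ be two such maximizers, both parametrized affinely so that $\tau(x,\alpha(t))=\tau(x,\beta(t))=tT$ with $T:=\tau(x,y)$. The first easy observation is that $\alpha(t)$ and $\beta(t)$ can never be timelike related on $(0,1)$: if, say, $\alpha(t)\ll\beta(t)$, then the reverse triangle inequality gives $tT=\tau(x,\beta(t))\geq\tau(x,\alpha(t))+\tau(\alpha(t),\beta(t))>tT$, a contradiction. A direct application of the concavity axiom \ref{concav3} to the pair $(\alpha,\beta)$ is, however, useless, because both endpoint pairs coincide and the inequality degenerates to the trivial bound $\tau(\alpha(t),\beta(t))\geq 0$. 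To extract nontrivial information I would perturb the endpoints. Using that $U$ is open and $X$ is strongly causal, pick $y_n\in U$ with $y\ll y_n$ and $y_n\to y$, and apply \ref{concav2} to obtain affinely parametrized maximizers $\mu_n\colon [0,1]\to U$ from $x$ to $y_n$. Now concavity \ref{concav3} applied to $(\alpha,\mu_n)$ and to $(\beta,\mu_n)$ is admissible, since the left endpoints agree and the right endpoints are timelike related, and yields
\begin{equation*}
    \tau(\alpha(t),\mu_n(t)),\ \tau(\beta(t),\mu_n(t))\ \geq\ t\,\tau(y,y_n)>0 \qquad \text{for all } t\in(0,1].
\end{equation*}
Dually, choosing $x_n\ll x$ with $x_n\to x$ and maximizers $\nu_n\colon[0,1]\to U$ from $x_n$ to $y$, concavity yields $\tau(\nu_n(t),\alpha(t)),\tau(\nu_n(t),\beta(t))\geq(1-t)\tau(x_n,x)$. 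So for every $n$ both $\alpha(t)$ and $\beta(t)$ lie inside the timelike diamond $I(\nu_n(t),\mu_n(t))\subseteq U$.

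To conclude, I would let $n\to\infty$: continuity of $\tau$ on $U\times U$ together with strong causality forces these diamonds to shrink onto a single causal limit curve, and regularity together with the rigid positioning of both $\alpha(t)$ and $\beta(t)$ between the past and future auxiliary maximizers pins them to the same limit point, giving $\alpha(t)=\beta(t)$ for all $t$. The principal obstacle is precisely this final step: converting the perturbative lower bounds on $\tau(\alpha(t),\mu_n(t))$ and $\tau(\nu_n(t),\alpha(t))$ (and their $\beta$-counterparts) into a pointwise identification of $\alpha$ and $\beta$. It requires combining the continuity of $\tau\big|_{U\times U}$, the separation of distinct points by timelike diamonds afforded by strong causality, and the fact that limits of timelike maximizers remain timelike maximizers under regularity; the trivial lower bound coming from a naïve application of concavity to $(\alpha,\beta)$ is, on its own, simply not strong enough.
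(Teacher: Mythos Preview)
Your existence argument is correct and matches the paper. For uniqueness, however, there are two genuine gaps.

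First, you silently identify ``geodesic'' with ``maximizer''. The statement asks for uniqueness of timelike \emph{geodesics} in $U$, and a geodesic is only locally maximizing; there is no a priori reason why a timelike geodesic contained in $U$ realizes $\tau(x,y)$. The paper devotes half of its proof to exactly this (its step (i)): if $\gamma\subseteq U$ were a non-maximizing geodesic, one sets $t_0=\sup\{t:\gamma\big|_{[0,t]}\text{ maximizes}\}$, picks a genuine maximizer $\eta$ from $\gamma(0)$ to $\gamma(t_0+\varepsilon)$, and applies concavity to $\eta$ against the locally maximizing segment $\gamma\big|_{[t_0-\varepsilon,t_0+\varepsilon]}$ to produce a broken curve from $\gamma(0)$ to $\gamma(t_0)$ strictly longer than $\gamma\big|_{[0,t_0]}$, a contradiction. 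Your proposal skips this step entirely.

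Second, your perturbative limit argument is, as you acknowledge, incomplete --- and under the stated hypotheses it cannot be closed without circularity. Knowing $\alpha(t),\beta(t)\in I(\nu_n(t),\mu_n(t))$ is only useful if these diamonds shrink to a point, which requires $\mu_n(t)$ and $\nu_n(t)$ to converge to the \emph{same} limit. But any subsequential limit of the $\mu_n$ (even granting a limit curve theorem, which already needs assumptions not present in the proposition) is merely \emph{some} maximizer from $x$ to $y$, and uniqueness of those is precisely what you are trying to prove. The paper sidesteps all of this by shifting the \emph{parameter} rather than the endpoints: one applies concavity to the pair $\gamma_1\big|_{[0,1-\varepsilon]}$ and $\gamma_2\big|_{[\varepsilon,1]}$ (affinely reparametrized on $[0,1]$), whose endpoint pairs are now genuinely timelike related, obtaining $\tau(\gamma_1(t),\gamma_2(t+\varepsilon))\geq\varepsilon\,\tau(x,y)>0$. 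Combined with the symmetric estimate this places $\gamma_1(t)\in I(\gamma_2(t-\varepsilon),\gamma_2(t+\varepsilon))$; choosing $\varepsilon$ via strong causality so that this diamond is disjoint from the corresponding one around $\gamma_1(t)$ gives an immediate contradiction --- no auxiliary sequences, no limits, no circularity.
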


\begin{rem}\label{strongcausalityrem}
    In the subsequent proof, we will rely on strong causality to show that for any point $\gamma(t)$ on a future-directed timelike curve $\gamma$ and any open set $V$ containing it, there exists an $\varepsilon>0$ such that $I(\gamma(t-\varepsilon),\gamma(t+\varepsilon))\subset V$. To see this, we first choose points $x_1,y_1,\dots,x_N,y_N$ in $X$ such that 
    \begin{equation*}
        \gamma(t)\in\Tilde{V}:=\bigcap_{i=1}^NI(x_i,y_i)\subset V.
    \end{equation*}
    Since $\Tilde{V}$ is open, by continuity of $\gamma$, there exists some $\varepsilon>0$ such that $\gamma(t-\varepsilon)$ and $\gamma(t+\varepsilon)$ are both contained in $\Tilde{V}$. This means $x_i\ll \gamma(t-\eps) \ll \gamma(t +\varepsilon)\ll y_i$ for all $1\leq i\leq N$ and hence
    \begin{equation*}
        I(\gamma(t-\varepsilon),\gamma(t+\varepsilon))\subset \Tilde{V}\subset V.
    \end{equation*}
    In particular, timelike diamonds with endpoints on $\gamma$ form a neighborhood basis for $\gamma(t)$. Moreover, if we further assume timelike path-connectedness (meaning that any pair of timelike related points is connected by a timelike curve), we can even conclude that timelike diamonds are a basis of the topology, as, in this setting, any point in $X$ lies on a timelike curve.
\end{rem}

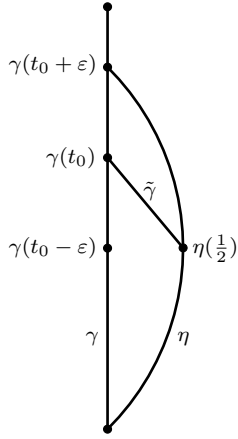
\begin{figure}
\centering
\begin{tikzpicture}[line cap=round,line join=round,>=triangle 45,x=0.4cm,y=0.4cm]
\clip(-3.5,-1.) rectangle (5.,15.);
\draw [line width=1.pt] (0.,0.)-- (0.,14.);
\draw [shift={(-6.,6.)},line width=1.pt]  plot[domain=-0.7853981633974483:0.7853981633974483,variable=\t]({1.*8.485281374238571*cos(\t r)+0.*8.485281374238571*sin(\t r)},{0.*8.485281374238571*cos(\t r)+1.*8.485281374238571*sin(\t r)});
\draw [line width=1.pt] (2.4852761676661537,6.009399916851108)-- (0.,9.);
\begin{scriptsize}
\draw [fill=black] (0.,0.) circle (1.5pt);
\draw [fill=black] (0.,14.) circle (1.5pt);
\draw[color=black] (-0.5,3) node {$\gamma$};
\draw [fill=black] (0.,12.) circle (1.5pt);
\draw[color=black] (-1.8,12.073331483997745) node {$\gamma(t_0+\varepsilon)$};
\draw[color=black] (2.5,3) node {$\eta$};
\draw [fill=black] (2.4852761676661537,6.009399916851108) circle (1.5pt);
\draw[color=black] (3.55,6) node {$\eta(\frac{1}{2})$};
\draw [fill=black] (0.,9.) circle (1.5pt);
\draw[color=black] (-1.2,9) node {$\gamma(t_0)$};
\draw[color=black] (1.4,7.882161783920265) node {$\tilde{\gamma}$};
\draw [fill=black] (0.,6.) circle (1.5pt);
\draw[color=black] (-1.8,6) node {$\gamma(t_0-\varepsilon)$};
\end{scriptsize}
\end{tikzpicture}
\caption{Geodesics in a comparison neighborhood must be maximizers} \label{fig:GeodCompNbhd}
\end{figure}

\begin{proof}[Proof of \autoref{uniquemaxlocal}]
    Existence follows from the definition of a comparison neighborhood. Indeed, \autoref{def:Concavity}\ref{concav2} ensures that $x$ and $y$ can be joined by a causal maximizer within $U$, and this maximizer has to be timelike by regularity (see \cite[Def.\ 3.16]{LLS}). We show uniqueness in two parts:
    \begin{enumerate}[label=(\roman*)]
        \item Every timelike geodesic in $U$ is a timelike maximizer.\label{uniqueness1}
        \item Timelike maximizers in $U$ are uniquely determined by their endpoints.\label{uniqueness2}
    \end{enumerate}
    \ref{uniqueness1} Let $\gamma\colon [0,1] \to U$ be a timelike geodesic in $U$ and suppose that it is not maximizing.  Define $t_0:=\sup \{t\in[0,1]\, \vert \, \gamma\big|_{[0,t]}\text{ is maximizing}\}$. By upper semicontinuity of the length functional and lower semicontinuity of $\tau$, we then have
    \begin{equation*}
        L(\gamma\big|_{[0,t_0]})\geq\limsup_{t\nearrow t_0}L(\gamma\big|_{[0,t]})=\lim_{t\nearrow t_0}\tau(\gamma(0),\gamma(t))\geq\tau(\gamma(0),\gamma(t_0)).
    \end{equation*}
    In particular, $\gamma\big|_{[0,t_0]}$ is maximizing. Now pick $\varepsilon>0$ small enough such that $\gamma$ is maximizing between $\gamma(t_0-\varepsilon)$ and $\gamma(t_0+\varepsilon)$.
    By the definition of $t_0$, we can find a maximizer $\eta\colon [0,1]\to U$ connecting $\gamma(0)$ to $\gamma(t_0 + \eps)$ which is strictly longer than the corresponding segment on $\gamma$, see \autoref{fig:GeodCompNbhd}. We can now use the concavity of the time separation to relate the $\tau$-distance of points along $\eta$ and points on the segment $\gamma\big|_{[t_0-\varepsilon,t_0+\varepsilon]}$:
    \begin{equation*}
        \tau(\eta(\tfrac{1}{2}),\gamma(t_0))\geq\tfrac{1}{2}(\tau(\eta(0),\gamma(t_0-\eps))+\tau(\eta(1),\gamma(t_0+\varepsilon)))= \tfrac{1}{2} \tau(\gamma(0), \gamma(t_0 - \eps)) = \tfrac{1}{2}L(\gamma\big|_{[0,t_0-\varepsilon]})
    \end{equation*}
    Therefore, defining a broken geodesic $\Tilde{\gamma}$ starting at $\gamma(0)$, following $\eta$ until $\eta(\tfrac{1}{2})$ and taking a longest path from there to $\gamma(t_0)$, cf.\ again \autoref{fig:GeodCompNbhd}, we find that
    \begin{equation*}
        L(\Tilde{\gamma})=\tfrac{1}{2}L(\eta)+\tau(\eta(\tfrac{1}{2}),\gamma(t_0))>\tfrac{1}{2}[L(\gamma\big|_{[0,t_0+\varepsilon]})+L(\gamma\big|_{[0,t_0-\varepsilon]})]=L(\gamma\big|_{[0,t_0]}),
    \end{equation*}
    contradicting maximality of $\gamma\big|_{[0,t_0]}$. This finishes the proof of \ref{uniqueness1}.

    \ref{uniqueness2} Suppose there were two distinct length-maximizing curves $\gamma_1,\gamma_2\colon [0,1]\to U$ connecting $x,y \in X$ and let $t \in [0,1]$ be such that $\gamma_1(t) \neq \gamma_2(t)$. By Remark \ref{strongcausalityrem}, there exists some $\eps >0$ with $I(\gamma_1(t-\eps), \gamma_1(t+\eps)) \cap I(\gamma_2(t-\eps), \gamma_2(t+\eps)) = \varnothing$. By comparing the segment joining $\gamma_1(0)$ to $\gamma_1(1-\eps)$ along $\gamma_1$ with that connecting $\gamma_2(\eps)$ to $\gamma_2(1)$ following $\gamma_2$, using concavity of $\tau$, we get
    \begin{equation*}
        \tau(\gamma_1(t), \gamma_2(t+\eps)) \geq \eps \tau(x,y)> 0,
    \end{equation*}
    i.e., $\gamma_1(t) \in I^-(\gamma_2(t+\eps))$. Analogously, we get $\gamma_1(t) \in I^+(\gamma_2(t-\eps))$, a contradiction. \qedhere
    
\end{proof}

\begin{rem}
    In the above proof, we made use of the upper semicontinuity of the length functional, which is proven in \cite[Prop.\ 3.17]{LLS} under the additional assumption of localizability, and which is therefore not directly applicable to our situation. We note, however, that the only part of the definition of localizability (see \cite[Def.\ 3.16]{LLS}) that is needed in the proof of this fact is that every point in $X$ possesses a neighborhood $U$ such that the time separation between two timelike related points in $U$ can be realized by a maximizing geodesic. In our setting, this is always satisfied as it is built into \autoref{def:Concavity}, so we do not need to assume localizability. 
\end{rem}

\begin{rem}
    The same conclusion as in \autoref{uniquemaxlocal}, i.e., that timelike geodesics in comparison neighborhoods are unique, holds true if $X$ satisfies a curvature bound from above by any $\kappa \in\mathbb R$. In fact, under this assumption, the uniqueness of timelike maximizers was proven in \cite[Prop.\ 4.8]{Patchwork}. We now show that any timelike geodesic in a curvature comparison neighborhood must be a maximizer:
    \begin{proof}
        Suppose that $\gamma\colon[0,1]\to U$ is a geodesic from $x$ to $y$, which does not maximize the $\tau$-distance between its endpoints. Then, without loss of generality, we can assume the existence of some $c\in[0,1]$ such that $\gamma\big|_{[0,c]}$ and $\gamma\big|_{[c,1]}$ are maximal (this can always be achieved by arguing as in \ref{uniqueness1} of the previous proof). Setting $z:=\gamma(c)$, we know that there exist nearby points $p\ll z\ll q$ on the image of $\gamma$ such that $\gamma$ realizes the distance between $p$ and $q$. We now consider a comparison triangle $\Delta \Bar{x}\Bar{y}\Bar{z}$ of $\Delta xyz$ in the appropriate comparison space $M_\kappa$ and choose corresponding points $\Bar{p},\Bar{q}$ on $[\Bar{x},\Bar{z}]$ and $[\Bar{z},\Bar{y}]$ respectively. Then, the curvature bound implies
        \begin{equation*}
            \tau(p,q)\geq\Bar{\tau}(\Bar{p},\Bar{q})>\Bar{\tau}(\Bar{p},\Bar{z})+\Bar{\tau}(\Bar{z},\Bar{q})=\tau(p,z)+\tau(z,q).
        \end{equation*}
        However, this is a contradiction to $\gamma$ being a maximizer between $p$ and $q$. 
    \end{proof}
\end{rem}

\begin{cor}\label{geodesiclimit}
    Let $X$ be a strongly causal regular Lorentzian pre-length space which is either locally concave or has curvature bounded from above by some $\kappa \in\mathbb R$. Let $(\gamma_k)_{k\in\mathbb N}$ be a sequence of timelike geodesics converging uniformly to a timelike curve $\gamma$. Then $\gamma$ is a geodesic. 
\end{cor}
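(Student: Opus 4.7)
The plan is to show that $\gamma$ is locally maximizing by exploiting the fact that, inside any comparison neighborhood, timelike geodesics are automatically timelike maximizers (as established in part (i) of the proof of \autoref{uniquemaxlocal} for the locally concave case, and in the remark immediately following it for the curvature-bounded case).

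Fix $t_0$ in the domain of $\gamma$ and pick a comparison neighborhood $U$ of $\gamma(t_0)$ (in either the sense of \autoref{def:Concavity} or a curvature comparison neighborhood, depending on the assumption). Using continuity of $\gamma$ together with uniform convergence $\gamma_k \to \gamma$, I would select a closed interval $[a,b] \subset [0,1]$ containing $t_0$ in its interior such that $\gamma([a,b]) \subset U$ and, for all $k$ sufficiently large, also $\gamma_k([a,b]) \subset U$. Since $\gamma_k$ is a timelike geodesic, its restriction $\gamma_k|_{[a,b]}$ is a timelike geodesic taking values in $U$, and the cited results therefore give that $\gamma_k|_{[a,b]}$ is a maximizer, i.e.\ $L(\gamma_k|_{[a,b]}) = \tau(\gamma_k(a), \gamma_k(b))$.

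Now I would pass to the limit $k \to \infty$. Lower semicontinuity of $\tau$ implies that the relation $\ll$ is open, so $\gamma_k(a) \ll \gamma_k(b)$ for all large $k$. Continuity of $\tau$ on $U \times U$ (which is part of the comparison neighborhood assumption in both settings) yields $\tau(\gamma_k(a),\gamma_k(b)) \to \tau(\gamma(a),\gamma(b))$, while upper semicontinuity of the $\tau$-length under uniform limits of causal curves --- which, as clarified in the remark following \autoref{uniquemaxlocal}, is guaranteed by the comparison neighborhood structure alone --- gives $L(\gamma|_{[a,b]}) \geq \limsup_k L(\gamma_k|_{[a,b]})$. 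Chaining these estimates together with the elementary bound $L(\gamma|_{[a,b]}) \leq \tau(\gamma(a),\gamma(b))$ produces
\begin{equation*}
    \tau(\gamma(a),\gamma(b)) \geq L(\gamma|_{[a,b]}) \geq \limsup_{k \to \infty} L(\gamma_k|_{[a,b]}) = \lim_{k \to \infty} \tau(\gamma_k(a),\gamma_k(b)) = \tau(\gamma(a),\gamma(b)),
\end{equation*}
forcing equality throughout. Hence $\gamma|_{[a,b]}$ is a maximizer, which shows that $\gamma$ is locally maximizing at $t_0$. Since $t_0 \in [0,1]$ was arbitrary, $\gamma$ is a geodesic.

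The main subtlety is ensuring that upper semicontinuity of $L$ applies without full localizability; this is precisely what the remark after \autoref{uniquemaxlocal} addresses, so the step is legitimate. The remaining ingredients --- openness of $\ll$, continuity of $\tau$ on a comparison neighborhood, and the maximality of timelike geodesics inside comparison neighborhoods --- are all readily available from the hypotheses and prior results.
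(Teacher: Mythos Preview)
Your argument is correct and follows essentially the same route as the paper's proof: restrict to a comparison neighborhood, use that timelike geodesics there are maximizers, and pass to the limit via upper semicontinuity of $L$ together with (semi)continuity of $\tau$. Your write-up is somewhat more explicit about the chain of inequalities, invoking continuity of $\tau$ on $U\times U$ rather than just lower semicontinuity, but the structure is the same.
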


\begin{proof}
    Fix $t \in [0,1]$ and let $U$ be a comparison neighborhood of $\gamma(t)$. Choose $\varepsilon>0$ such that $\gamma\big|_{(t-\varepsilon,t+\varepsilon)}$ remains in $U$. As $(\gamma_k)_k$ converges uniformly to $\gamma$ on $(t-\eps, t+\eps)$, we can assume, without loss of generality, that $\gamma_k\big|_{(t-\varepsilon,t+\varepsilon)}$ remains in $U$ for all $k$. Combining the previous proposition and remark, we conclude that $\gamma_k\big|_{(t-\varepsilon,t+\varepsilon)}$ is a maximizer for each $k$. Now, by upper semicontinuity of the length functional and lower semicontinuity of $\tau$, the limit of maximizers in a strongly causal Lorentzian pre-length space is again a maximizer. In particular, $\gamma$ is a maximizer in a neighborhood of any fixed $t \in [0,1]$ and is therefore a geodesic. 
\end{proof}

\section{Local Existence and Uniqueness}\label{sec:LocalExistence}

The crucial step towards proving global existence and uniqueness of geodesics between timelike related points is to first do it in a neighborhood of a geodesic. This can be understood as a generalization of the fact that timelike geodesics in Lorentzian manifolds with sectional curvature bounded from above by $0$ do not encounter conjugate points \cite[10.20.\ Cor.]{oneil}. To be more precise, we want to prove the following:

\begin{thm}\label{großesThm}
    Let $X$ be a globally hyperbolic, strongly causal, locally causally closed, timelike path-connected regular Lorentzian pre-length space and let $\alpha\colon[0,L]\to X$ be a timelike geodesic. If $X$ is locally concave then there exists an open set $U$ and two causally convex open sets $U_0,U_L\subseteq U$ with $\alpha(0)\in U_0,\,\alpha(L)\in U_L$ and $\alpha([0,L])\subseteq U$ such that:
    \begin{enumerate}[label=(\alph*)]
        \item\label{großesThma} For any pair $(p,q)\in U_0 \times U_L$ there exists a unique timelike geodesic $\gamma\colon [0,L]\to X$ with $\gamma(0)=p,\, \gamma(L)=q$ and $\gamma([0,L])\subset U$. 
        \item\label{großesThmb} Within the set $U$, $\gamma$ is the (unique) longest curve from $p$ to $q$. In particular, if $p\leq \alpha(0)$ and $\alpha(L)\leq q$, then
        \begin{equation}\label{Längenungleichung}
            L(\gamma)\geq L(\alpha)+\tau(p,\alpha(0))+\tau(\alpha(L), q).
        \end{equation}
        \item\label{großesThmc} Whenever $p_k\to\alpha(0)$ and $q_k\to\alpha(L)$ for sequences $(p_k)_{k \in \N}\subseteq U_0$ and $(q_k)_{k \in \N}\subseteq U_L$, the corresponding curves $\gamma_k$ connecting $p_k$ to $q_k$ converge to $\alpha$ uniformly.
        \item \label{großesThmd} If $\alpha(0)\ll p$ or $\alpha(0)=p$ and $\alpha(L)\ll q$ or $\alpha(L)=q$, then the function $t\mapsto\tau(\alpha(t),\gamma(t))$ is concave. The analogous statement holds if $\alpha(0)\gg p$ and $\alpha(L)\gg q$.
    \end{enumerate}
\end{thm}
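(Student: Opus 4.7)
The plan is a compactness-plus-gluing induction built around \autoref{uniquemaxlocal}. By compactness of $\alpha([0,L])$ and local concavity, one covers the image of $\alpha$ by finitely many comparison neighborhoods, and via a Lebesgue-number argument obtains a partition $0 = t_0 < t_1 < \dots < t_N = L$ together with comparison neighborhoods $V_i \supset \alpha([t_{i-1},t_i])$ and small causally convex neighborhoods $W_i \ni \alpha(t_i)$ with $W_i \subset V_i \cap V_{i+1}$ for interior indices. Setting $U$ to be a small open thickening of $\alpha([0,L])$ inside $\bigcup_i V_i$ and $U_0 \subset V_1$, $U_L \subset V_N$ to be sufficiently small causally convex neighborhoods of the endpoints, I would proceed by induction on $N$. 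The base case $N = 1$ follows from \autoref{uniquemaxlocal} together with a short check: (b) by concatenating causal curves (provided by causal path-connectedness) from $p$ to $\alpha(0)$ and $\alpha(L)$ to $q$ with $\alpha$ itself and invoking maximality of $\gamma$ in $U = V_1$; (c) by uniqueness combined with \autoref{geodesiclimit}; and (d) directly from condition \ref{concav3} of \autoref{def:Concavity}.

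For the inductive step, assume the theorem for any covering by at most $N-1$ comparison neighborhoods. Given $(p,q) \in U_0 \times U_L$, the inductive hypothesis applied to $\alpha|_{[0, t_{N-1}]}$ produces, for each $q'$ in a small neighborhood of $\alpha(t_{N-1})$, a unique timelike geodesic $\gamma'_{q'}$ from $p$ to $q'$ in some neighborhood $U'$; meanwhile, \autoref{uniquemaxlocal} applied in $V_N$ gives a unique maximizer $\gamma''_{q'}$ from $q'$ to $q$. I would then maximize the continuous function $F(q') := L(\gamma'_{q'}) + L(\gamma''_{q'})$, continuity following from statement (c) of the inductive hypothesis combined with that of \autoref{uniquemaxlocal}, over a closed compact neighborhood of $\alpha(t_{N-1})$ inside $W_{N-1}$, obtaining a maximizer $q^*$ which, after shrinking $U_0$ and $U_L$, lies in the interior. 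The decisive step is to show that the concatenation of $\gamma'_{q^*}$ and $\gamma''_{q^*}$ has no corner at $q^*$: otherwise, within a common comparison neighborhood straddling $q^*$, the argument of part (i) of the proof of \autoref{uniquemaxlocal} produces a strictly longer maximizer between two points $a, b$ on the respective segments. Picking any $q^{**}$ on this new maximizer and invoking the maximality clause (b) of the inductive hypothesis and of \autoref{uniquemaxlocal} to lower-bound $L(\gamma'_{q^{**}})$ and $L(\gamma''_{q^{**}})$ yields $F(q^{**}) > F(q^*)$, a contradiction.

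Uniqueness in (a) proceeds as in part (ii) of the proof of \autoref{uniquemaxlocal}: for two distinct candidates $\gamma^1, \gamma^2$ in $U$, one picks $t$ with $\gamma^1(t) \neq \gamma^2(t)$, finds a common comparison neighborhood containing both nearby curve segments, and uses \ref{concav3} on shifted segments together with \autoref{strongcausalityrem} to derive a contradiction between disjoint timelike diamonds and a forced positive $\tau$-value. Statement (c) then follows by combining uniqueness with compactness and \autoref{geodesiclimit}, and (b) from maximality of $\gamma$ inside $U$ as in the base case. For (d), I would subdivide $[0, L]$ so that on each sub-interval both $\alpha$ and $\gamma$ lie in a common comparison neighborhood; a preliminary application of openness of $\ll$ together with the concavity inequality near the endpoints of $[0,L]$ ensures the endpoint conditions of \ref{concav3} hold at each sub-interval boundary, so the inequality applies piecewise. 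Globalizing then uses the elementary fact that a continuous function on an interval which is concave in a neighborhood of every point is globally concave (a minimum of the defect $g = \tau(\alpha(\cdot), \gamma(\cdot)) - \textnormal{linear}$ at an interior point would, by local concavity, propagate $g \equiv \min$ via a connectedness argument). I expect the main obstacle to be the no-corner step of the inductive argument: one must simultaneously exhibit the strictly longer local competitor, verify that the resulting broken curve remains in $U'$ so that the inductive maximality can be invoked, and carefully shrink $U$, $U_0$, $U_L$ to be compatible with the inductive neighborhoods while maintaining the required causal convexity.
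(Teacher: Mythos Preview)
Your inductive strategy is genuinely different from the paper's direct construction (which maximizes $\sum_i\tau(a_i,a_{i+1})$ over tuples in a chain of causal diamonds along $\alpha$, then shows the maximizing broken geodesic is unbroken). In principle an induction could work, but two steps in your outline have real gaps that the paper's approach is specifically designed to avoid.

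The most serious is uniqueness. The argument of \autoref{uniquemaxlocal}(ii) compares the \emph{full} segments $\gamma_1|_{[0,1-\varepsilon]}$ and $\gamma_2|_{[\varepsilon,1]}$ via concavity; this requires both entire curves to sit in one comparison neighborhood. In your setting you only know that $\gamma^1,\gamma^2$ pass through the $V_i$'s, with no control over \emph{when} they do so, so near a disagreement point their local segments need not have timelike-related (let alone common) endpoints, and \ref{concav3} cannot be invoked. The paper handles this by first proving a tube estimate---every geodesic $\gamma$ in $U$ satisfies $\gamma(t)\in D(\tilde\alpha(t),\varepsilon)$---which synchronizes parameters and makes $f(t)=\tau(\gamma_1(t),\gamma_2(t+\varepsilon))$ well-defined and concave on all of $[0,L-\varepsilon]$. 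That tube estimate comes out of the tuple construction together with an averaging limit (\autoref{problem}); it is not clear how to obtain it from your induction.

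The second gap is in (d). You claim that openness of $\ll$ plus concavity near the endpoints forces $\alpha(t)\ll\gamma(t)$ at every subdivision node, but this is precisely what is at issue: when $\alpha(0)=p=\gamma(0)$ (which the hypothesis allows) openness gives nothing, and concavity on a sub-interval $[0,t_1]$ only bounds $\tau(\alpha(t),\gamma(t))$ below by a combination involving $\tau(\alpha(t_1),\gamma(t_1))$, whose positivity is unknown. The paper gets around this by shifting: it studies $g(t,s)=\tau(\alpha(t),\gamma(t+s))$, which is positive for $s=\varepsilon$ thanks to the tube estimate, and runs an open--closed argument in $s$ to reach $s=0$. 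Your piecewise-to-global concavity lemma is fine once positivity is in hand, but you have not supplied that input. (A smaller issue: the claim that the maximizer $q^*$ of $F$ lies in the interior ``after shrinking $U_0,U_L$'' also needs an argument, since \ref{großesThmc} as stated only yields convergence when endpoints approach those of $\alpha$, not continuity of $q'\mapsto L(\gamma'_{q'})$ for a fixed $p\neq\alpha(0)$.)
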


To prove the existence of an appropriate neighborhood of $\alpha([0,L])$, we make use of the following fact:

\begin{lem}\label{deltaexists}
    With the same setup as in \autoref{großesThm}, we can find some $\delta>0$ such that the sets of the form $I(\alpha(t),\alpha(t+\delta))$ are comparison neighborhoods for all $t\in[0, L-\delta]$.
\end{lem}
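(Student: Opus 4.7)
The plan is to prove \autoref{deltaexists} via a compactness argument along the image $\alpha([0,L])$. For every $s \in [0,L]$, local concavity furnishes a comparison neighborhood $V_s$ of $\alpha(s)$, and the key reduction is to shrink each $V_s$ to a timelike diamond with endpoints on $\alpha$. For $s \in (0,L)$, \autoref{strongcausalityrem} directly produces $\eta_s > 0$ with $s \pm \eta_s \in [0,L]$ and $I(\alpha(s - \eta_s), \alpha(s + \eta_s)) \subset V_s$, since $\alpha$ is timelike and $s$ is an interior parameter of its domain.

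The boundary values $s \in \{0, L\}$ form the main technical point, as \autoref{strongcausalityrem} does not apply there. For $s = 0$ I would argue by contradiction: if $I(\alpha(0), \alpha(\eta)) \not\subset V_0$ for every small $\eta > 0$, then one could select $x_n \in J(\alpha(0), \alpha(1/n)) \setminus V_0$. Global hyperbolicity makes $J(\alpha(0), \alpha(1))$ compact, so a subsequence $x_{n_k}$ converges to some $x$. Closedness of $\leq$ (coming from global hyperbolicity together with local causal closedness) combined with $\alpha(1/n_k) \to \alpha(0)$ forces $\alpha(0) \leq x \leq \alpha(0)$, and antisymmetry of $\leq$ (implied by strong causality via chronology) yields $x = \alpha(0)$, contradicting $x \in X \setminus V_0$, which is closed. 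The symmetric argument settles $s = L$.

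With $\eta_s$ in hand for every $s$, the relatively open intervals $J_s := (\max(0, s-\eta_s), \min(L, s+\eta_s))$ (with one-sided modifications at the endpoints) cover the compact interval $[0,L]$. Extracting a finite subcover and invoking the Lebesgue number lemma yields $\delta > 0$ such that every subinterval $[t, t+\delta] \subset [0, L]$ lies entirely in some $J_{s_i}$. Transitivity of $\ll$, together with the fact that $\alpha$ is timelike, then gives $I(\alpha(t), \alpha(t+\delta)) \subset I(\alpha(s_i - \eta_{s_i}), \alpha(s_i + \eta_{s_i})) \subset V_{s_i}$.

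It remains to verify that a timelike diamond $I := I(\alpha(t), \alpha(t+\delta))$ contained in a comparison neighborhood $V_{s_i}$ is itself a comparison neighborhood. Item \ref{concav1} of \autoref{def:Concavity} transfers immediately, and \ref{concav3} follows since any maximizer in $I$ is also a maximizer in $V_{s_i}$. For \ref{concav2}, given $x, y \in I$ with $x \ll y$ and a maximizer $\beta$ between them in $V_{s_i}$, regularity forces $\beta$ to be timelike, so $x \ll \beta(r) \ll y$ for $0 < r < 1$; transitivity of $\ll$ with $\alpha(t) \ll x$ and $y \ll \alpha(t+\delta)$ then keeps $\beta(r)$ inside $I$. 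The hardest step is the boundary case $s \in \{0,L\}$, which is precisely the reason global hyperbolicity and local causal closedness appear in the hypotheses; once it is dealt with, the remainder is a routine Lebesgue-number argument.
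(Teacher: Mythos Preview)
Your compactness approach via the Lebesgue number lemma is essentially the paper's argument, which instead selects an explicit finite chain of overlapping diamonds $I(\alpha(t_i),\alpha(t_i+\delta_i))$ along $\alpha$ and sets $\delta$ equal to the minimum overlap. Your closing paragraph, checking that a timelike diamond contained in a comparison neighborhood again satisfies the three conditions of \autoref{def:Concavity}, is a useful detail the paper leaves implicit.

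Where you diverge is the endpoint case, and there you overcomplicate matters. Your claim that \autoref{strongcausalityrem} does not apply at $s\in\{0,L\}$ is not correct: the identical strong-causality argument works one-sidedly. Trap $\alpha(0)$ in a finite intersection $\tilde V=\bigcap_i I(x_i,y_i)\subset V_0$; by continuity of $\alpha$ there is $\eta>0$ with $\alpha(\eta)\in\tilde V$, so $x_i\ll\alpha(0)$ and $\alpha(\eta)\ll y_i$ for every $i$, whence $I(\alpha(0),\alpha(\eta))\subset\tilde V\subset V_0$. This is exactly how the paper treats the endpoints, using only strong causality. Your contradiction argument instead relies on antisymmetry of $\leq$, which is not an obvious consequence of strong causality as defined here (timelike diamonds forming a subbasis of the topology), so that step would need separate justification from the available hypotheses. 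In particular, the endpoint case is neither the hardest step nor the reason global hyperbolicity and local causal closedness figure among the hypotheses of \autoref{großesThm}.
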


\begin{proof}
    Both $\alpha(0)$ and $\alpha(L)$ are contained in comparison neighborhoods $U_0$ and $U_L$, respectively. As in \autoref{strongcausalityrem}, there exists an $\varepsilon_->0$ such that $I(\alpha(0),\alpha(\varepsilon_-))\subset U_0$ and so, in particular, $I(\alpha(0),\alpha(\varepsilon_-))$ is a comparison neighborhood. Similarly, we can find an $\varepsilon_+ > 0$ such that $I(\alpha(\varepsilon_+),\alpha(L))$ is a comparison neighborhood. Set $\varepsilon_1:=\min\{\varepsilon_-,\varepsilon_+\}$.

    Now cover $\alpha\big|_{[\varepsilon_1/2, L-\varepsilon_1/2]}$ with finitely many timelike comparison diamonds $I(\alpha(t_0),\alpha(t_0+\delta_0)),\ldots, I(\alpha(t_n),\alpha(t_n+\delta_n))$, where $t_0<t_1<t_0+\delta_0<t_2<t_1+\delta_1<\dots<t_n+\delta_n$ and define $\varepsilon_2:=\min\{(t_i+\delta_i)-t_{i+1}\,|\,0\leq i\leq n-1\}$. Setting $\delta:=\min\{\varepsilon_1,\varepsilon_2\}$ then gives the claim. 
\end{proof}

Before explicitly defining the sets $U_0, U_L$ and $U$, we introduce some shorthand notation:
\begin{align*}
    &D(\alpha(t),\varepsilon):=I(\alpha(t-\tfrac{\varepsilon}{2}),\alpha(t+\tfrac{\varepsilon}{2})),\\
    &\overline{D}(\alpha(t),\varepsilon):=J(\alpha(t-\tfrac{\varepsilon}{2}),\alpha(t+\tfrac{\varepsilon}{2}))\\
    &D(\alpha,\varepsilon) := \mkern-28mu \smashoperator[r]{\bigcup_{
\substack{t\in[\sfrac{\eps}{2},L-\sfrac{\eps}{2}]}} }
\, I(\alpha(t-\tfrac{\varepsilon}{2}),\alpha(t+\tfrac{\varepsilon}{2})),
\end{align*}
so $D(\alpha(t),\varepsilon)$ describes a timelike $\eps$-diamond centered at $\alpha(t)$ and $D(\alpha,\varepsilon)$ is the union of all such diamonds, covering the entire curve $\alpha$ except for its endpoints. We now choose an $\varepsilon>0$ satisfying the following properties:
\begin{enumerate}[label=(\roman*)]
    \item\label{epsilon2} There exists some $\delta>0$ so that $D(\alpha(t),3\varepsilon+\delta)$ is comparison neighborhood for any $t$. 
    \item\label{epsilon1} We can find points $x\ll\alpha(0)$ and $y\gg\alpha(L)$ such that $I(x,\alpha(\tfrac{5}{2}\varepsilon))$ and $I(\alpha(L-\tfrac{5}{2}\varepsilon),y)$ are comparison neighborhoods. 
    \item\label{epsilon3} There exists a natural number $N$ such that $L=N\varepsilon$.
\end{enumerate}

\begin{rem}\label{etadefinition}
To see that such an $\eps>0$ can actually be found, first notice that \ref{epsilon2} is a direct consequence of \autoref{deltaexists} and \autoref{epsilon3} is just a matter of convenience. We now claim that \ref{epsilon1} is implied by the assumptions of strong causality and timelike path-connectedness. Indeed, the definition of strong causality guarantees the existence of some $z\ll\alpha(0)$ and by timelike path-connectedness, there exists a timelike curve $\sigma\colon [0,1]\to X$ from $z$ to $\alpha(0)$. So if $U$ is any causally convex comparison neighborhood of $\alpha(0)$, then for large enough $t$, we will have $\sigma(t)\in U$ and can therefore choose $x=\sigma(t)$. The point $y$ can be constructed similarly. 

The reason for demanding \autoref{epsilon1} is the inconvenient circumstance that the sets of the form $D(\alpha(t),\varepsilon)$ do not cover the endpoints of $\alpha$. This issue could, of course, easily be dealt with if we knew that the unique maximal geodesics $\beta_1\colon [0,\frac{\varepsilon}{2}]\to X$ from $x$ to $\alpha(0)$ and $\beta_2 \colon [0, \frac{\eps}{2}] \rightarrow X$ from $\alpha(L)$ to $y$ were parts of a geodesic prolongation of $\alpha$, but this need not be true in general. However, for our purposes, it will suffice to choose $x \ll \alpha(0)$ (and similarly $y$) in such a way that the maximal geodesics $\eta\colon [0,\frac{3}{2}\varepsilon]\to X$ from $x$ to $\alpha(\frac{3}{2}\varepsilon)$ and $\Tilde{\eta}\colon [0,\frac{\varepsilon}{2}]\to X$ from $x$ to $\alpha(\frac{\varepsilon}{2})$ are \textit{sufficiently close} to $\alpha$. To be more precise, we want to achieve 
\begin{equation*}
\eta\left(\tfrac{3}{4}\varepsilon\right)\in D\left(\alpha\left(\tfrac{3}{4}\varepsilon\right),\tfrac{\varepsilon}{2}\right)=I\left(\alpha\left(\tfrac{\varepsilon}{2}\right),\alpha(\varepsilon)\right)
\end{equation*}
and 
\begin{equation*}
\Tilde{\eta}\left(\tfrac{\varepsilon}{4}\right)\in D\left(\alpha\left(\tfrac{\varepsilon}{4}\right),\tfrac{\varepsilon}{2}\right)=I\left(\alpha(0),\alpha\left(\tfrac{\varepsilon}{2}\right)\right),
\end{equation*}
cf.\ \autoref{fig:ConstrEta}.

\begin{figure}
\centering
\begin{tikzpicture}[line cap=round,line join=round,>=triangle 45,x=1.0cm,y=1.0cm]
\clip(-4.5,-4.5) rectangle (0.5,5.5);
\fill[line width=1.pt,fill=black,fill opacity=0.10000000149011612] (-2.,0.) -- (-1.,1.) -- (-2.,2.) -- (-3.,1.) -- cycle;
\fill[line width=2.pt,fill=black,fill opacity=0.10000000149011612] (-2.,0.) -- (-1.,-1.) -- (-2.,-2.) -- (-3.,-1.) -- cycle;
\draw [line width=1.pt] (-2.,-2.)-- (-1.,-4.);
\draw [line width=1.pt] (-2.,-2.)-- (-1.,-1.);
\draw [line width=1.pt] (-2.,0.)-- (-1.,-1.);
\draw [line width=1.pt] (-2.,-2.)-- (-3.,-1.);
\draw [line width=1.pt] (-3.,-1.)-- (-2.,0.);
\draw [line width=1.pt] (-2.,0.)-- (0.,2.);
\draw [line width=1.pt] (0.,2.)-- (-2.,4.);
\draw [line width=1.pt] (-2.,4.)-- (-4.,2.);
\draw [line width=1.pt] (-4.,2.)-- (-2.,0.);
\draw [line width=1.pt] (-2.,2.)-- (-1.,1.);
\draw [line width=1.pt] (-2.,2.)-- (-3.,1.);
\draw [line width=1.pt] (-2.,4.)-- (-1.,5.);
\draw [line width=1.pt] (-2.,4.)-- (-3.,5.);
\draw [line width=1.pt] (-1.5,-3.)-- (-2.,0.);
\draw [line width=1.pt] (-1.5,-3.)-- (-2.,4.);
\draw [line width=1.pt] (-1.5,-3.)-- (-0.25,-1.75);
\draw [line width=1.pt] (-0.25,-1.75)-- (-2.,0.);
\draw [line width=1.pt] (-1.5,-3.)-- (-3.25,-1.25);
\draw [line width=1.pt] (-3.25,-1.25)-- (-2.,0.);
\draw [line width=1.pt] (-2.,-2.)-- (-2.,5.);
\draw [line width=1.pt] (-2.,0.)-- (-1.,1.);
\draw [line width=1.pt] (-1.,1.)-- (-2.,2.);
\draw [line width=1.pt] (-2.,2.)-- (-3.,1.);
\draw [line width=1.pt] (-3.,1.)-- (-2.,0.);
\draw [line width=1.pt] (-2.,0.)-- (-1.,-1.);
\draw [line width=1.pt] (-1.,-1.)-- (-2.,-2.);
\draw [line width=1.pt] (-2.,-2.)-- (-3.,-1.);
\draw [line width=1.pt] (-3.,-1.)-- (-2.,0.);
\begin{scriptsize}
\draw [fill=black] (-2.,-2.) circle (2.0pt);
\draw [fill=black] (-2.,0.) circle (2.0pt);
\draw[color=black] (-2.642179432019956,0.06141986190377435) node {$\alpha(\varepsilon/2)$};
\draw [fill=black] (-2.,4.) circle (2.0pt);
\draw[color=black] (-2.8,4.03) node {$\alpha(3\varepsilon/2)$};
\draw [fill=black] (-1.,-4.) circle (2.0pt);
\draw[color=black] (-0.7543409716180294,-4) node {$x$};
\draw[color=black] (-1.4051858138233413,-3.6498989295681756) node {$\beta_1$};
\draw [fill=black] (-1.5,-3.) circle (2.0pt);
\draw[color=black] (-1.000658716050395,-3.070675765581224) node {$\beta_1(t)$};
\draw [fill=black] (-1.75,-1.5) circle (2.0pt);
\draw[color=black] (-1.05193749747141628,-1.6977023398343751) node {$\tilde{\eta}_t(\varepsilon/4)$};
\draw [fill=black] (-1.75,0.5) circle (2.0pt);
\draw[color=black] (-1.0197129115962874,0.3555080605457944) node {$\eta_t(3\varepsilon/4)$};
\draw[color=black] (-2.2525678500264785,2.9039039073952964) node {$\alpha$};
\end{scriptsize}
\end{tikzpicture}
\caption{Construction of $\eta$ and $\tilde{\eta}$} \label{fig:ConstrEta}
\end{figure}
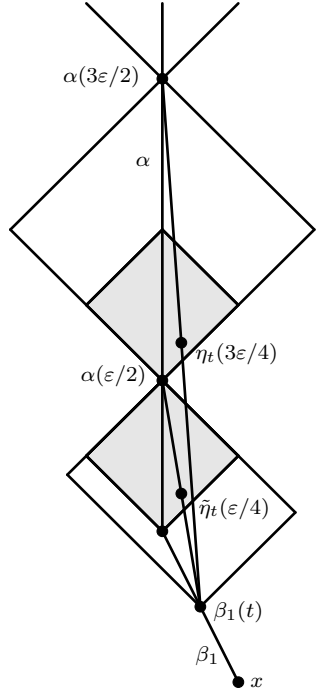

This is not immediate, but we can always find some $t\in[0,1]$ for which the maximal geodesics $\eta_t$ from $\beta_1(t)$ to $\alpha(\frac{3}{2}\varepsilon)$ and $\Tilde{\eta}_t$ from $\beta_1(t)$ to $\alpha(\frac{\varepsilon}{2})$ satisfy these conditions. Indeed, this is a consequence of the limit curve theorem \cite[Thm. 3.7]{LLS} which (up to subsequences) ensures uniform convergence of $\eta_t$ and $\Tilde{\eta}_t$ to some limit curves as $t$ approaches $\frac{\varepsilon}{2}$ and these limit curves must then coincide with the respective restrictions of $\alpha$ to $[0,3\varepsilon/2]$ and $[0,\varepsilon/2]$ by upper semicontinuity of the length functional and lower semicontinuity of $\tau$. This allows us to choose a parameter $t \in [0,1]$ as claimed, and we will assume, without loss of generality, that $t=0$. For the endpoint $\alpha(L)$, we similarly find maximal curves $\zeta$ and $\Tilde{\zeta}$ from $\alpha(L-\frac{3}{2}\varepsilon)$ to $y$, respectively from $\alpha(L-\frac{\varepsilon}{2})$ to $y$ which are sufficiently close to $\alpha$ by applying the same argument to $y \gg \alpha(L)$ and $\beta_2$.
Finally, we define $\Tilde{\alpha}\colon [-\frac{\varepsilon}{2},L+\frac{\varepsilon}{2}]\to X$ as the concatenation $\Tilde{\alpha}:=\beta_1*\alpha*\beta_2$. 
\end{rem}

We can now introduce appropriate neighborhoods of $\alpha(0),\alpha(L)$ and $\alpha([0,L])$ as follows:
\begin{itemize}
    \item[--] $U_0\, :=D(\Tilde{\alpha}(0),\varepsilon)$
    \item[--] $U_L:=D(\Tilde{\alpha}(L),\varepsilon)$
    \item[--] $U\;\, :=D(\Tilde{\alpha},\varepsilon)$
\end{itemize}

Note that our choice of $\eps > 0$ is precisely tailored to ensure that three disjoint comparison neighborhoods of the form $D(\Tilde{\alpha}(t),\varepsilon)$ fit into a common larger comparison neighborhood (cf.\ conditions \ref{epsilon2} and \ref{epsilon1}). Moreover, exactly $N$ such neighborhoods centered at the points $p_0:=\alpha(0),p_1:=\alpha(\varepsilon),\ldots,p_N:=\alpha(N\varepsilon)=\alpha(L)$ fit into the set $U$ by condition \ref{epsilon3}. The union of these sets $D(p_i,\varepsilon)$ covers all of $\Tilde{\alpha}$ except for the points 
$$x_0:=\Tilde{\alpha}(-\tfrac{\varepsilon}{2}),\,x_1:=\Tilde{\alpha}(\tfrac{\varepsilon}{2}),\,x_2:=\Tilde{\alpha}(\tfrac{3\varepsilon}{2}), \, \dots\, ,x_{N+1}:=\Tilde{\alpha}(L+\tfrac{\varepsilon}{2}).$$

\begin{proof}[Proof of \autoref{großesThm}]
    We can assume, without loss of generality, that $\alpha\colon [0,L]\to X$ is parametrized by arc length.

    \textbf{Existence:} Let $p\in D(p_0,\varepsilon), q\in D(p_N,\varepsilon)$ and consider tuples $(a_0,\ldots,a_N)$ satisfying $a_0=p$, $a_N=q$ and $a_i\in\overline{D}(p_i,\varepsilon)$ for all $0 \leq i \leq N$, see \autoref{fig:SetupGroßesThm}. Such tuples will be termed \textit{admissible}. The set $T$ of all admissible tuples is compact, since any causal diamond $\overline{D}(p_i,\varepsilon)$ is compact by the assumption of global hyperbolicity. As a result, there must exist a tuple that maximizes the sum
    \begin{equation}\label{tausum}
        \sum_{i=0}^N\tau(a_i,a_{i+1}).
    \end{equation}
    We want to construct a broken timelike geodesic from $p$ to $q$ by successively connecting the points $a_i$. For this to work, we need to ensure that $a_i\in D(p_i,\varepsilon)=I(x_{i},x_{i+1})$, i.e., exclude the possibility $a_i\in\overline{D}(p_i,\varepsilon)\setminus D(p_i,\varepsilon)$ as this might introduce a null segment. To do so, we argue that we can always replace $a_i$ by some $\bar a_i$ contained in $D(p_i,\varepsilon)$ without decreasing the value of the sum in \eqref{tausum}.

    First, note that $a_0\in I(x_0,x_1)$ by assumption. Next, we try to find an appropriate $\bar{a}_1\in I(x_1,x_2)$. As the points $a_{0}$ and $a_{2}$ are both contained in a common comparison neighborhood, by \autoref{uniquemaxlocal}, there exists a unique maximal geodesic $[a_{0},a_{2}]\colon [0,1]\to X$ joining them. 
    Henceforth, we will use this notation to denote the unique maximal segment between two points in a comparison neighborhood. We now claim that $\bar{a}_1:=[a_0,a_2](\frac{1}{2})$ is a suitable choice. In fact, the maximality of $[a_0,a_2]$ guarantees that replacing $a_1$ by $\bar{a}_1$ does not decrease the value of the sum in \eqref{tausum}, and so it remains to show that $\bar{a}_1\in I(x_1,x_2)$. We start by arguing that $\bar{a}_1\in I^-(x_2)$. To do so, consider the maximal segment $[x_1,x_3]\colon[0,1]\to X$ and note that $a_0\ll x_1$ as well as $a_2\leq x_3$.
    If we had $a_2\ll x_3$, we could directly use local concavity to compare the midpoints of the segments $[a_0,a_2]$ and $[x_1,x_3]$. 

\begin{minipage}{0.35\textwidth}
\centering
\begin{tikzpicture}[line cap=round,line join=round,>=triangle 45,x=1.3cm,y=1.3cm]
x=1.0cm,y=1.0cm,
axis lines=middle,
ymajorgrids=true,
xmajorgrids=true,
xmin=2.0,
xmax=6,
ymin=-4.1,
ymax=6.1,
xtick={2.0,4.0,...,6.0},
ytick={-4.0,-2.0,...,6.0},]
\clip(2.,-4.1) rectangle (6.,6.1);
\fill[line width=1.pt,fill=black,fill opacity=0.10000000149011612] (4.,2.) -- (3.,1.) -- (4.,0.) -- (5.,1.) -- cycle;
\fill[line width=1.pt,fill=black,fill opacity=0.10000000149011612] (4.,4.) -- (5.,5.) -- (4.138752428286914,5.861247571713086) -- (3.138752428286914,4.861247571713086) -- cycle;
\fill[line width=1.pt,fill=black,fill opacity=0.10000000149011612] (4.,0.) -- (3.,-1.) -- (4.,-2.) -- (5.,-1.) -- cycle;
\fill[line width=1.pt,fill=black,fill opacity=0.10000000149011612] (4.155046219775697,-3.8449537802243032) -- (5.,-3.) -- (4.,-2.) -- (3.1550462197756968,-2.8449537802243032) -- cycle;
\draw [line width=1.pt] (4.,2.)-- (3.,1.);
\draw [line width=1.pt] (3.,1.)-- (4.,0.);
\draw [line width=1.pt] (4.,0.)-- (5.,1.);
\draw [line width=1.pt] (5.,1.)-- (4.,2.);
\draw [line width=1.pt] (4.,4.)-- (5.,5.);
\draw [line width=1.pt] (5.,5.)-- (4.138752428286914,5.861247571713086);
\draw [line width=1.pt] (4.138752428286914,5.861247571713086)-- (3.138752428286914,4.861247571713086);
\draw [line width=1.pt] (3.138752428286914,4.861247571713086)-- (4.,4.);
\draw [line width=1.pt] (4.,0.)-- (3.,-1.);
\draw [line width=1.pt] (3.,-1.)-- (4.,-2.);
\draw [line width=1.pt] (4.,-2.)-- (5.,-1.);
\draw [line width=1.pt] (5.,-1.)-- (4.,0.);
\draw [line width=1.pt,] (4.,5.)-- (4.138752428286914,5.861247571713086);
\draw [line width=1.pt,] (4.,5.)-- (4.,3.5);
\draw [line width=1.pt,dotted] (4.,3.5)-- (4.,2.5);
\draw [line width=1.pt] (4.155046219775697,-3.8449537802243032)-- (5.,-3.);
\draw [line width=1.pt] (5.,-3.)-- (4.,-2.);
\draw [line width=1.pt] (4.,-2.)-- (3.1550462197756968,-2.8449537802243032);
\draw [line width=1.pt] (3.1550462197756968,-2.8449537802243032)-- (4.155046219775697,-3.8449537802243032);
\draw [line width=1.pt] (4.155046219775697,-3.8449537802243032)-- (4.,-3.);
\draw [line width=1.pt] (4.,-3.)-- (4.,2.5);
\draw [line width=1.pt] (5.,-3.)-- (5.,2.5);
\draw [line width=1.pt] (3.1550462197756968,-2.8449537802243032)-- (3.,-2.);
\draw [line width=1.pt] (3.,-2.)-- (3.,2.5);
\draw [line width=1.pt] (5.,5.)-- (5.,3.5);
\draw [line width=1.pt] (3.138752428286915,4.861247571713086)-- (3.,4.);
\draw [line width=1.pt] (3.,4.)-- (3.,3.5);
\draw [line width=1.pt,dotted] (3.,3.5)-- (3.,2.5);
\draw [line width=1.pt,dotted] (5.,3.5)-- (5.,2.5);
\draw [shift={(35.91604905004076,0.968371488097471)},line width=1.pt]  plot[domain=3.0128640951891827:3.0611824841302444,variable=\t]({1.*31.495415833676777*cos(\t r)+0.*31.495415833676777*sin(\t r)},{0.*31.495415833676777*cos(\t r)+1.*31.495415833676777*sin(\t r)});
\draw [shift={(35.91604905004076,0.968371488097471)},line width=1.pt,dotted]  plot[domain=3.0611824841302444:3.0928781654044126,variable=\t]({1.*31.495415833676777*cos(\t r)+0.*31.495415833676777*sin(\t r)},{0.*31.495415833676777*cos(\t r)+1.*31.495415833676777*sin(\t r)});
\draw [shift={(35.91604905004076,0.968371488097471)},line width=1.pt]  plot[domain=3.0928781654044126:3.2691057325879505,variable=\t]({1.*31.49541583367678*cos(\t r)+0.*31.49541583367678*sin(\t r)},{0.*31.49541583367678*cos(\t r)+1.*31.49541583367678*sin(\t r)});
\begin{scriptsize}
\draw [fill=black] (4.,2.) circle (2.pt);
\draw[color=black] (3.75,2) node {$x_3$};
\draw [fill=black] (4.,0.) circle (2.pt);
\draw[color=black] (3.75,0) node {$x_2$};
\draw [fill=black] (4.,4.) circle (2.pt);
\draw[color=black] (3.75,4) node {$x_N$};
\draw [fill=black] (4.138752428286914,5.861247571713086) circle (2.pt);
\draw[color=black] (3.75,5.86) node {$x_{N+1}$};
\draw [fill=black] (4.,-2.) circle (2.pt);
\draw[color=black] (3.75,-2) node {$x_1$};
\draw [fill=black] (4.,5.) circle (2.pt);
\draw[color=black] (3.75,5) node {$p_N$};
\draw [fill=black] (4.155046219775697,-3.8449537802243032) circle (2.pt);
\draw[color=black] (3.9,-3.84) node {$x_0$};
\draw [fill=black] (4.,-3.) circle (2.pt);
\draw[color=black] (3.75,-3) node {$p_0$};
\draw [fill=black] (4.,-1.) circle (2.pt);
\draw[color=black] (3.75,-1) node {$p_1$};
\draw [fill=black] (4.,1.) circle (2.pt);
\draw[color=black] (3.75,1) node {$p_2$};
\draw [fill=black] (4.681228981385298,5.0115427452836085) circle (2.pt);
\draw[color=black] (4.45,5.01) node {$a_N$};
\draw [fill=black] (4.67,-3.0424279142553354) circle (2.pt);
\draw[color=black] (4.45,-3.04) node {$a_0$};
\draw[color=black] (4.6,0) node {$\gamma$};
\draw[color=black] (3.85,-0.5) node {$\alpha$};
\draw [fill=black] (4.481098048089429,-0.9822884107923868) circle (2.pt);
\draw[color=black] (4.717586474037168,-0.98) node {$a_1$};
\draw [fill=black] (4.420667854671432,1.0150822426837798) circle (2.pt);
\draw[color=black] (4.64477776662083,1.01) node {$a_2$};
\end{scriptsize}
\end{tikzpicture}
\captionof{figure}{Setup} \label{fig:SetupGroßesThm}

\end{minipage}
\hfill
\begin{minipage}{0.6\textwidth}
    In the general case, we introduce the points $a_2^k:=[a_0,a_2](1-\frac{1}{k})$ and $\bar{a}_1^k:=[a_0,a_2^k](\frac{1}{2})$. Then, $a_2^k\to a_2$ as well as $\bar{a}_1^k\to \bar{a}_1$ for $k\to\infty$ and by push-up (see \cite[Lem.\ 2.10]{LLS}) $a_2^k\ll x_3$ for all $k\in\mathbb N$.
    Hence, we can use the concavity of the time separation between the segments $[a_0,a_2^k]$ and $[x_1,x_3]$ to infer
    \begin{equation*}
    \begin{split}
        \tau(\bar{a}_1,x_2)&=\lim_{k\to\infty}\tau(\bar{a}_1^k,x_2)\geq\\
        &\geq\lim_{k\to\infty}\tfrac{1}{2}\left(\tau(a_0,x_1)+\tau(a_2^k,x_3)\right)=\\
        &=\tfrac{1}{2}\left(\tau(a_0,x_1)+\tau(a_2,x_3)\right)>0
    \end{split}
    \end{equation*}
    i.e., $\bar{a}_1\in I^-(x_2)$. Note that in the above inequality, we used that the time separation restricted to comparison neighborhoods is continuous. To show $\bar{a}_1\in I^+(x_1)$, we compare $[a_0,a_2]$ to the curve $\eta$ defined in \autoref{etadefinition}, i.e., the maximal geodesic from $x_0$ to $x_2$ which satisfies $\eta(\frac{1}{2})\gg x_1$. Then, as above, the concavity of $\tau$ grants
    \begin{equation*}
        \tau(x_1,\bar{a}_1)\geq\tau(\eta(\tfrac{1}{2}),\bar{a}_1)\geq\frac{1}{2}(\tau(x_0,a_0)+\tau(x_2,a_2))>0,
    \end{equation*}
    in particular, $\bar{a}_1\in I(x_1,x_2)$. Continuing to argue like this (without having to work with $\eta$), we can show inductively that all $a_i$ can be assumed to lie in $I(x_i,x_{i+1})=D(p_i,\varepsilon)$ for $i\leq N-1$. For $i=N$, this once again holds by assumption.\\

    As a result of this, we know that the concatenation
    \begin{equation*}
        \gamma:=[a_0,a_1]*[a_1,a_2]*\dots*[a_{N-1},a_N]
    \end{equation*}
\end{minipage}\\

    defines a broken timelike geodesic connecting $p$ to $q$. We claim, moreover, that (upon reparametrization) this curve is, in fact, a geodesic, i.e., that $\gamma$ remains a local maximizer in a neighborhood of any $a_i$.
    For this, we argue as follows:
    
    Define $\bar a_i:=[a_{i-1},a_{i+1}](\frac{1}{2})$ and apply concavity as before to conclude $\bar a_i \in D(p_i,\varepsilon)$. In particular, $(a_0,\ldots,\bar a_i,\ldots,a_N)$ is an admissible tuple and by maximality of $[a_{i-1},a_{i+1}]$ we have 
    \begin{equation*}
        \tau(a_0,a_1)+\cdots+\tau(a_{i-1},\bar a_i)+\tau(\bar a_i,a_{i+1})+\cdots\tau(a_{N-1},a_N)\geq\sum_{i=0}^N\tau(a_i,a_{i+1}).
    \end{equation*}
    But since $(a_0,\ldots,a_i,\ldots,a_N)$ already maximizes \eqref{tausum}, this can only be true if equality holds and, in particular, $\tau(a_{i-1},\bar a_i)+\tau(\bar a_i,a_{i+1})=\tau(a_{i-1},a_i)+\tau(a_i,a_{i+1})$. Altogether, we have
    \begin{equation*}
    \begin{split}
        L([a_{i-1},a_i]*[a_i,a_{i+1}])&=\tau(a_{i-1},a_i)+\tau(a_i,a_{i+1})=\\
        &=\tau(a_{i-1},\bar a_i)+\tau(\bar a_i,a_{i+1})=L([a_{i-1},a_{i+1}]),
    \end{split}
    \end{equation*}
    so $[a_{i-1},a_i]*[a_i,a_{i+1}]$ is maximizing and, hence, $\gamma$ is a geodesic.
    
    To finish the existence part of the proof, it remains to show that $\gamma$ does not leave the set $U$. 
    
    \textbf{Claim:} The geodesic $\gamma\colon [0,L]\to X$ satisfies
    \begin{equation}\label{claim}
        \gamma(t)\in D(\Tilde{\alpha}(t),\varepsilon) \quad \text{for all } t \in [0,L].
    \end{equation}
    \textit{Proof of claim.} We first show $\gamma(i\varepsilon)\in D(p_i,\varepsilon)$ for $0 \leq i \leq N$. Note that for the points $a_i$ we already have $a_i\in D(p_i,\varepsilon)$ but they would only satisfy $a_i=\gamma(i\varepsilon)$ if they were equally spaced, i.e., if
    \begin{equation}\label{equallyspaced}
        \tau(a_0,a_1)=\tau(a_1,a_2)=\dots=\tau(a_{N-1},a_N).
    \end{equation}
    As this will not necessarily be the case, we successively replace the points $a_i$ using the following scheme:
    \begin{align*}
        &a_i^0:=a_i,\hspace{0.5cm}&&\text{ for all } 0\leq i\leq N,\\
        &a_0^k:=a_0,\,a_N^k:=a_N,\,a_i^k:=[a_{i-1}^{k-1},a_{i+1}^{k-1}](\tfrac{1}{2}),\hspace{0.5cm}&&\text{ for all }1\leq i\leq N-1, \, k \geq 1.
    \end{align*}
    By the same argument as earlier in the proof, we have $a_i^k\in D(p_i,\varepsilon)$ with $a_i^k$ contained in the image of $\gamma$. In particular, we can assume w.l.o.g.\ that $a_i^k \rightarrow \tilde{a}_i\in \overline{D}(p_i,\varepsilon)$ by compactness of the causal diamonds. In fact, we can even show that $\tilde{a}_i\in D(p_i,\varepsilon)$ (using that $\tilde{a}_0=a_0\in D(p_0,\varepsilon)$ and arguing as above). Finally, the fact that the points $\tilde{a}_i$ satisfy \eqref{equallyspaced} will be proven in the appendix (see \autoref{problem}).

    We conclude that $\tilde{a}_i=\gamma(i\varepsilon)\in D(p_i,\varepsilon)$ as desired. For $t\in\left((i-1)\varepsilon,i\varepsilon\right)$, where $2\leq i\leq N-1$, the inclusion \eqref{claim} is again a consequence of the concavity of $\tau$, comparing points on $[\tilde{a}_{i-1},\tilde{a}_i]$ to points on $[x_{i-1},x_i]$ and $[x_i,x_{i+1}]$, respectively. This is the same argument we have used repeatedly throughout this proof. As before, the cases $t\in (0,\varepsilon)$ and $t\in((N-1)\varepsilon,L)$ require special care, and we give a short argument treating the former case, with the latter being analogous. 
    We start by checking $\gamma(\frac{\varepsilon}{2})\in I(\alpha(0),\alpha(\varepsilon))$. Showing $\gamma(\frac{\varepsilon}{2})\ll\alpha(\varepsilon)$ works as always. Showing $\gamma(\varepsilon)\gg\alpha(0)$ works by proving $\gamma(\frac{\varepsilon}{2})\gg\Tilde{\eta}(\frac{\varepsilon}{2})$ (as always by using concavity) and using that $\Tilde{\eta}(\frac{\varepsilon}{2})\gg\alpha(0)$. Recall that $\Tilde{\eta}$ was defined in \autoref{etadefinition} as the maximal geodesic connecting $x_0$ to $x_1$. For all remaining $t$ we can once again prove $\gamma(t)\in D(\Tilde{\alpha}(t),\varepsilon)$ with the same argument, however to show $\gamma(t)\gg\Tilde{\alpha}(t)$ for $t<\frac{\varepsilon}{2}$ one needs to use the maximal geodesic from $x_0$ to $p_0$.

    This finishes the proof of the claim and, thereby, also of the existence part of \autoref{großesThm}. Before we continue, observe that \eqref{claim}, in fact, holds for \textit{any} timelike geodesic satisfying the assumptions of \autoref{großesThm}. Indeed, for such a curve $\gamma$, we can choose points $a_i\in D(p_i,\varepsilon)$ on the image of $\gamma$ and proceed as above.

    \textbf{Uniqueness:} By way of contradiction, suppose we could find two distinct timelike geodesics $\gamma_1,\gamma_2\colon [0,L]\to X$ satisfying the assumptions of \autoref{großesThm}. Define the function
    \begin{equation*}
        f\colon [0,L-\varepsilon]\to\mathbb{R},\hspace{1cm}t\mapsto\tau(\gamma_1(t),\gamma_2(t+\varepsilon)).
    \end{equation*}
    As explained above, we can now apply \eqref{claim} to $\gamma_1$ and $\gamma_2$ and conclude that $\gamma_1(t)\in D(\Tilde{\alpha}(t),\varepsilon)$ and $\gamma_2(t+\varepsilon)\in D(\Tilde{\alpha}(t+\varepsilon),\varepsilon)$ for all $t\in[0,L-\varepsilon]$. This immediately implies that $\gamma_1(t) \ll \tilde{\alpha}(t+\tfrac{\eps}{2}) \ll \gamma_2(t+\eps)$ and so $f > 0$. Moreover, as both $\gamma_1(t)$ and $\gamma_2(t+\eps)$ are contained in a common comparison neighborhood, for any fixed $t \in [0,L-\eps]$, the function $f$ is concave.
    
    More explicitly, defining $l_1:=f(L-\varepsilon)$ and $l_2:=f(0)$, we have
    \begin{equation}\label{concavungl}
        f(t)\geq\frac{t}{L-\varepsilon}l_1+\frac{L-\varepsilon-t}{L-\varepsilon}l_2
    \end{equation}
    As geodesics have constant speed, the quantity $l_1$ can also be interpreted as 
    \begin{equation*}
        l_1=\tau(\gamma_1(L-\eps),\gamma_1(L))=L(\gamma_1\big|_{[(i-1)\varepsilon,\,i\varepsilon]})=\frac{L(\gamma_1)}{N} \quad \text{for all } 1\leq i \leq N
    \end{equation*}
    and similarly $l_2=L(\gamma_2\big|_{[(i-1)\varepsilon,\, i\varepsilon]})=\frac{L(\gamma_2)}{N}$. We will now prove that $l_1=l_2$, implying $L(\gamma_1)=L(\gamma_2)$. Suppose to the contrary that $l_2\neq l_1$ and assume w.l.o.g.\ that $l_2 < l_1$. Then, by \eqref{concavungl}, 
    \begin{equation*}
    \tau(\gamma_1(\varepsilon),\gamma_2(2\varepsilon))\geq\frac{\varepsilon}{L-\varepsilon}l_1+\frac{L-2\varepsilon}{L-\varepsilon}l_2>\frac{\varepsilon}{L-\varepsilon}l_2+\frac{L-2\varepsilon}{L-\varepsilon}l_2=l_2.
    \end{equation*}
    In particular, the length of the timelike maximizer $[\gamma_1(\varepsilon),\gamma_2(2\varepsilon)]$ is strictly bigger than $l_2$. Concatenating with $[\gamma_2(0),\gamma_1(\varepsilon)]$ yields a timelike curve connecting $\gamma_2(0)$ to $\gamma_2(2\varepsilon)$ whose length exceeds $2l_2$, contradicting the maximality of $\gamma_2\big|_{[0,2\varepsilon]}$. Hence, indeed, $l_1=l_2$, and we can assume, without loss of generality, that both curves are parametrized by arc length, i.e., $l_1=l_2=\varepsilon$. The concavity relation \eqref{concavungl} then simply reads $f\geq\varepsilon$.

    Following a similar argument, we will now be able to show that $\gamma_1\big|_{[0,\varepsilon]}=\gamma_2\big|_{[0,\varepsilon]}$. For this, fix $t\in[0,\varepsilon]$, and observe that $\gamma_2\big|_{[0,t+\varepsilon]}$ is the \textit{unique} timelike curve from $\gamma_2(0)$ to $\gamma_2(t+\varepsilon)$ of maximal length by \autoref{uniquemaxlocal} and the choice of $\eps$. Now, as $f(t)\geq\varepsilon$, the length of $[\gamma_2(0),\gamma_1(t)]*[\gamma_1(t),\gamma_2(t+\varepsilon)]$ is at least $t+\varepsilon = \tau(\gamma_2(0), \gamma_2(t+\eps))$. Hence, this curve must coincide with $\gamma_2$ on $[0, t + \eps]$, and so, in particular, $\gamma_1(t)=\gamma_2(t)$. Repeating this on the subsequent intervals $[(i-1)\eps, i \eps]$, we get $\gamma_1\big|_{[0,L-\varepsilon]}=\gamma_2\big|_{[0,L-\varepsilon]}$ with $\gamma_1\big|_{[L-\varepsilon,L]}=\gamma_2\big|_{[L-\varepsilon,L]}$ then following immediately from local uniqueness of geodesics. 

    This finishes the proof of \ref{großesThma}. For \ref{großesThmb}, note that, by its very construction, $\gamma$ is always the longest curve from $p$ to $q$ remaining entirely in $U$, and this immediately implies \eqref{Längenungleichung}. Next, we prove \ref{großesThmc}. Given $\delta>0$, we need to find an integer $K\in\mathbb N$ such that for all $k\geq K$ and all $t\in[0,L]$, we have $\gamma_k(t)\in B_\delta(\alpha(t))$, where $\gamma_k$ denotes the unique timelike geodesic from $p_k\in U_0$ to $q_k\in U_L$ and $B_\delta(\dotto)$ denotes a metric ball of radius $\delta >0$ with respect to the background metric $d$. To do so, first recall that, by \autoref{strongcausalityrem}, the collection of timelike diamonds $(D(\tilde\alpha(t),\tilde\varepsilon))_{\tilde\varepsilon>0}$ forms a neighborhood basis of $\alpha(t)$. In particular, it is possible to find $\tilde\varepsilon<\varepsilon$ such that $D(\tilde\alpha(t),\tilde{\varepsilon})\subseteq B_\delta(\alpha(t))$ for all $t\in[0,L]$ and such that $L = \tilde N\tilde \eps$ for some $\tilde N \in \N$. 
    We define $\tilde U:=D(\tilde\alpha,\tilde\varepsilon)$, $\tilde U_0:=D(\tilde\alpha(0),\tilde\varepsilon)$ and $\tilde U_L:=D(\tilde\alpha(L),\tilde\varepsilon)$ then by \ref{großesThma} for any pair $(\tilde p,\tilde q)\in\tilde U_0\times\tilde U_L$ there exists a unique timelike geodesic $\tilde\gamma$ from $\tilde p$ to $\tilde q$ which remains entirely in $\tilde U$.
    As $p_k \rightarrow p$ and $q_k \rightarrow q$, there exists a natural number $K$ so that for any $k\geq K$, we have $p_k\in \tilde U_0$ and $q_k\in \tilde U_L$. Since $\tilde U\subset U$, the unique timelike geodesic from $p_k$ to $q_k$ in $\tilde U$ must coincide with $\gamma_k$ and so by \eqref{claim}, for any $t\in[0,L]$, we have
    \begin{equation*}
        \gamma_k(t)\in D(\tilde{\alpha}(t),\tilde{\varepsilon})\subset B_\delta(\alpha(t)),
    \end{equation*}
    which is what we wanted to show.

    It remains to prove \ref{großesThmd}. First note that in the case $p=\alpha(0)$ and $q=\alpha(L)$, we have $\alpha=\gamma$, so concavity of $\tau(\alpha(t),\gamma(t))$ is trivially satisfied. So assume $p=\alpha(0)$ and $q\gg\alpha(L)$, with all other cases being analogous. We define the function 
    \begin{equation*}
        g\colon [0,L]\times[0,\varepsilon]\to\mathbb{R}, \hspace{1cm} g(t,s)=
        \begin{cases}
            \tau(\alpha(t),\gamma(t+s)), & t\leq L-s\\
            \tau(\alpha(L-s),\gamma(L)), & t>L-s.
        \end{cases}
    \end{equation*}
    Note that $\alpha(t)$ and $\gamma(t+s)$ are both contained in a common comparison neighborhood for any $s\in[0,\varepsilon]$ by \eqref{claim}. This has two implications: First, it yields continuity of $g$ as both curves are continuous and $\tau$ is continuous when restricted to comparison neighborhoods. Second, it means that if for fixed $s$, the function 
    $$g_s\colon (0,L-s]\to\mathbb R,\hspace{1cm} g_s(t):= g(t,s)$$
    is positive then it must be concave on $(0,L-s]$ by local concavity of $\tau$. Let $I\subseteq[0,\varepsilon]$ be the set of values $s$ for which $g_s>0$. If we can show that $0\in I$, we are done. By construction, $\varepsilon\in I$, so $I$ is nonempty. Now, let $s_0\in I\setminus\{0\}$. First note that all $s\in[s_0,\varepsilon]$ must also lie in $I$ since for fixed $t$, the function $t\mapsto g(t,s)$ is monotonically increasing, hence, $I$ is an interval. Next, note that $g_{s_0}(t)$ is bounded from below by $\min\{g_{s_0}(0),g_{s_0}(L-s_0)\}>0$ as a consequence of concavity, so for $s$ close enough to $s_0$, $g_s$ must still be positive and thus, in particular, $I$ is open. Finally, we want to show that $I$ is closed as well. To this end, let $(s_k)_{k\in\mathbb N}$ be a sequence in $I$ such that $s_k\searrow s$ and we argue that $s\in I$. If $t\geq L-s$, then $g_s(t)>0$ is trivially satisfied, so we only need to consider $t< L-s$. Without loss of generality, we assume the values $s_k$ to satisfy $t<L-s_k$ for all $k\in\mathbb N$. Then, by concavity of $g_{s_k}$,
    \begin{equation*}
        g_s(t)=\lim_{k\to\infty}g_{s_k}(t)\geq\lim_{k\to\infty}\frac{g_{s_k}(L-s_k)-g_{s_k}(0)}{L-s_k}\cdot t + g_{s_k}(0)=\frac{g_s(L-s)-g_s(0)}{L-s}\cdot t+g_s(0)>0
    \end{equation*}
    for all $t\in(0,L-s)$. In particular, $s\in I$, so $I$ is also closed and by connectedness coincides with $[0,\varepsilon]$. This implies $g_0(t)>0$ and thus the desired concavity, finishing the proof of \ref{großesThmd}.
\end{proof}

\section{Globalization}\label{sec:Globalization}

As a first application of \autoref{großesThm}, we can prove that if geodesics between any two points in a globally hyperbolic, locally concave regular Lorentzian length space are unique, they must vary continuously. To make this precise, we define the \textit{geodesic map} as in \cite{Patchwork}. In the following denote by $X_\ll:=\{(p,q)\in X\times X\,|\,p\ll q\}$ the set of timelike related points in $X$.

\begin{defin}
    Let $X$ be a regular Lorentzian pre-length space. If each pair of points $(p,q)\in X_\ll$ can be joined by a unique timelike geodesic $\gamma_{pq}$, we say that X is \textit{uniquely geodesic}. In this case, we define the \textit{geodesic map} as
    \begin{equation*}
        G \colon X_\ll\times[0,1]\to X,\hspace{0.5cm}G(x,y,t):=\gamma_{xy}(t)
    \end{equation*}
    and we say that geodesics \textit{vary continuously} in $X$ if $G$ is continuous. 
\end{defin}

\begin{rem}
    Geodesics vary continuously in the above sense if and only if they do so in the sense of uniform convergence w.r.t.\ changes in their endpoints, cf.\ \cite[Prop.\ 4.5]{Patchwork}.
\end{rem}

\begin{cor}\label{contvargeod}
    Let $X$ be a globally hyperbolic, strongly causal, timelike path-connected, locally causally closed, and uniquely geodesic regular Lorentzian pre-length space. If $X$ is locally concave, then geodesics vary continuously in $X$. 
\end{cor}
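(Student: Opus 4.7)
The plan is to invoke Theorem \ref{großesThm}(c) directly at each point $(p, q, t) \in X_\ll \times [0,1]$. Setting $\alpha := \gamma_{pq}$, Theorem \ref{großesThm} produces neighborhoods $U_0 \ni p$ and $U_L \ni q$ together with an open set $U \supseteq \alpha([0,L])$ such that each timelike related pair $(p', q') \in U_0 \times U_L$ is joined by a unique timelike geodesic lying in $U$. By the unique geodesicness of $X$, this geodesic must coincide with $\gamma_{p'q'}$ up to reparametrization.

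Now suppose $(p_k, q_k, t_k) \to (p, q, t)$ with $(p_k, q_k) \in X_\ll$. Since $\tau(p, q) > 0$ and $\tau$ is lower semicontinuous, the pairs $(p_k, q_k)$ eventually satisfy $p_k \ll q_k$ and eventually lie in $U_0 \times U_L$. Theorem \ref{großesThm}(c) then yields uniform convergence of the associated geodesics to $\gamma_{pq}$. Combining this with $t_k \to t$ and the continuity of the limit curve $\gamma_{pq}$, we obtain $G(p_k, q_k, t_k) \to \gamma_{pq}(t) = G(p, q, t)$, which is the desired continuity of the geodesic map.

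The only mildly delicate point is bookkeeping with parametrizations: Theorem \ref{großesThm} places all geodesics on a common interval $[0, L]$, whereas $G$ uses the $\tau$-proportional parametrization on $[0, 1]$. This discrepancy is harmless, because $\tau(p_k, q_k) \to \tau(p, q)$ by continuity of $\tau$ within a common comparison neighborhood, so the affine reparametrizations $s \mapsto s \cdot \tau(p_k, q_k)/\tau(p, q)$ converge uniformly to the identity on $[0,1]$. Hence the corollary is essentially a translation of Theorem \ref{großesThm}(c) into the language of the geodesic map, and there is no additional obstacle once uniform convergence of the parametrized geodesics is in hand.
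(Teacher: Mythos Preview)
Your proposal is correct and follows essentially the same approach as the paper: apply \autoref{großesThm} to $\alpha=\gamma_{pq}$, use unique geodesicness to identify the geodesic produced there with $\gamma_{p_kq_k}$, and invoke part \ref{großesThmc} for uniform convergence. The paper simply cites the preceding remark (equivalence of continuity of $G$ with uniform convergence) instead of handling the parametrization bookkeeping explicitly, but the argument is the same; note also that since all geodesics in \autoref{großesThm} are parametrized proportionally on the fixed interval $[0,L]$, the rescaling to $[0,1]$ is the single affine map $t\mapsto t/L$ independent of $k$, so the $\tau$-ratio argument is not actually needed.
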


\begin{proof}
Let $p\ll q$ be in $X$ and let $(p_k)_{k\in\mathbb N}$, $(q_k)_{k\in\mathbb N}$ be sequences of points satisfying $p_k\ll q_k$ for all $k\in\mathbb N$ and $p_k\to p$ and $q_k\to q$. Choose neighborhoods $U_0$ of $p$ and $U_L$ of $q$ as in \autoref{großesThm}, then for $k$ larger than some $K\in\mathbb N$ we have $p_k\in U_0$ and $q_k\in U_L$ and by \autoref{großesThm}\ref{großesThmc} the curves $\gamma_{p_kq_k}$ converge to $\gamma_{pq}$ uniformly. By the above remark, this is enough to prove the corollary.
\end{proof}

While this result serves as a first step, we do not want to assume our space to be uniquely geodesic but would rather have this be a consequence of concavity. The difficulty of concluding this from \autoref{großesThm} is that the uniqueness statement there only applies to curves remaining in one specific neighborhood. The critical step to go from this to a global statement is to get rid of the neighborhood of $\alpha$ while still recovering the essence of \autoref{großesThm}. The following statement and proof are roughly based on the ideas and constructions in \cite[4.2 Lem.]{BallmannLectures}, where a similar result is proven for metric spaces. 

\begin{lem}\label{globlem}
    Let $X$ be a globally hyperbolic, strongly causal, timelike path-connected and locally causally closed regular Lorentzian pre-length space and let $\gamma\colon [0,1]\to X$ be a future-directed timelike geodesic. Let ${D}_0$ and ${D}_1$ be timelike comparison diamonds around $\gamma(0)$ and $\gamma(1)$, respectively.
    If $X$ is locally concave, then for $\tilde{D}_0:=D_0\cap I^-(\gamma(0))$ and $\tilde{D}_1:=D_1\cap I^+(\gamma(1))$ the following holds:
    \begin{enumerate}[label=(\roman*)]
    \item There exists a continuous map 
    $\Gamma\colon \tilde{D}_0\times \tilde{D}_1\times[0,1]\to X$
    such that $\Gamma(p,q,\dotto)$ is a future-directed timelike geodesic from $p$ to $q$ for any pair $(p,q) \in \tilde D_0 \times \tilde D_1$. \label{globlem1}
    \item If $H\colon [0,1]\times[0,1]\to X$ is a homotopy with $H(0,\dotto)=\gamma$, $H(s,0)\in \tilde{D}_0$, $H(s,1)\in \tilde{D}_1$ and $H(s,\dotto)$ is a timelike geodesic for all $s\in[0,1]$, then $H(s,\dotto)=\Gamma(H(s,0),H(s,1),\dotto)$.\label{globlem2}
    \end{enumerate}
\end{lem}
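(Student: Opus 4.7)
The plan is to run a continuation argument in endpoint space, extending the local existence–uniqueness statement of \autoref{großesThm} from a neighborhood of $\gamma$ to all of $\tilde{D}_0\times\tilde{D}_1$. As a starting step, I would apply \autoref{großesThm} to $\gamma$ itself to obtain a common neighborhood $V\supseteq\gamma([0,1])$, small neighborhoods $V_0\ni\gamma(0)$ and $V_L\ni\gamma(1)$ (which we may arrange to lie inside $D_0$ and $D_1$ respectively), together with a continuous family of timelike geodesics $\Gamma_\gamma\colon V_0\times V_L\times[0,1]\to X$, whose continuous dependence on the endpoints is provided by \autoref{großesThm}\ref{großesThmc}. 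This already defines $\Gamma$ on a neighborhood of $(\gamma(0),\gamma(1))$ inside $D_0\times D_1$, and the task reduces to propagating this definition to arbitrary $(p,q)\in\tilde{D}_0\times\tilde{D}_1$.

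For the propagation, given $(p,q)\in\tilde{D}_0\times\tilde{D}_1$, I would build a canonical continuous path $\phi^{p,q}\colon[0,1]\to D_0\times D_1$ from $(\gamma(0),\gamma(1))$ to $(p,q)$ using the unique timelike maximizers $\beta_p\colon[0,1]\to D_0$ from $p$ to $\gamma(0)$ and $\beta_q\colon[0,1]\to D_1$ from $\gamma(1)$ to $q$, whose existence is guaranteed by \autoref{uniquemaxlocal}: on $[0,\tfrac{1}{2}]$, slide the left endpoint backwards along $\beta_p$ from $\gamma(0)$ to $p$ while keeping the right endpoint at $\gamma(1)$, and on $[\tfrac{1}{2},1]$, slide the right endpoint forwards along $\beta_q$ from $\gamma(1)$ to $q$ while keeping the left endpoint at $p$. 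A connectedness argument then produces the geodesic at $(p,q)$: let $S\subseteq[0,1]$ consist of those $s_0$ for which there exists a continuous family $\{\hat{\gamma}_s\}_{s\in[0,s_0]}$ of timelike geodesics with $\hat{\gamma}_0=\gamma$ and endpoints given by $\phi^{p,q}(s)$. The set $S$ is nonempty, open (reapplying \autoref{großesThm} at $\hat{\gamma}_{s_0}$ produces a continuous extension for $s$ near $s_0$, with the extension matching the already-constructed family by local uniqueness), and closed (global hyperbolicity confines the curves to a compact causal diamond, the limit curve theorem produces a subsequential limit, \autoref{geodesiclimit} makes it a geodesic, and local uniqueness from \autoref{großesThm} at the candidate limit forces full-sequence convergence). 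Hence $S=[0,1]$ and I set $\Gamma(p,q,\cdot):=\hat{\gamma}_1$. Continuity of $\Gamma$ in $(p,q)$ is then obtained from continuity of $\phi^{p,q}$ in $(p,q)$ combined with \autoref{großesThm}\ref{großesThmc} applied along a finite cover of the constructed family.

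Part \ref{globlem2} follows by a parallel connectedness argument: for $H$ as in the hypothesis, set $T:=\{\,s_1\in[0,1] : H(s,\cdot)=\Gamma(H(s,0),H(s,1),\cdot)\text{ for all }s\in[0,s_1]\,\}$. One checks that $T$ is nonempty, open (by the local uniqueness in \autoref{großesThm} applied at $H(s_1,\cdot)$: both $H(s,\cdot)$ and $\Gamma(H(s,0),H(s,1),\cdot)$ lie in the neighborhood produced there with matching endpoints, hence coincide), and closed (by joint continuity of $H$ and $\Gamma$), giving $T=[0,1]$. As a byproduct, \ref{globlem2} implies that $\Gamma$ does not depend on the particular canonical path chosen in its construction, so the a priori path-dependent definition is \emph{a posteriori} well defined. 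The main obstacle in this scheme is the closedness step for $S$: upgrading subsequential convergence of $\hat{\gamma}_{s_k}$ to convergence of the full sequence requires invoking the uniqueness from \autoref{großesThm} at the limit geodesic to identify $\hat{\gamma}_{s_k}$ for all large $k$ as the unique geodesic with the prescribed endpoints in a common neighborhood. A secondary technicality is the mismatch at $s=0$, since $(\gamma(0),\gamma(1))\notin\tilde{D}_0\times\tilde{D}_1$; this is resolved by seeding the continuation at a point $(p^*,q^*)\in\tilde{D}_0\times\tilde{D}_1$ close to $(\gamma(0),\gamma(1))$ and already covered by $\Gamma_\gamma$, and by reading the case $s=0$ of \ref{globlem2} as the limit as $s\searrow 0$.
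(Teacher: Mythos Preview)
Your proposal is correct and follows essentially the same strategy as the paper: a continuation argument in endpoint space along the unique maximizers $\beta_0\subset D_0$ and $\beta_1\subset D_1$, with \autoref{großesThm} providing openness and the limit curve theorem together with \autoref{geodesiclimit} providing closedness, and part~\ref{globlem2} handled by a second open--closed argument. The only cosmetic difference is that the paper parametrizes over the two-dimensional square $[0,r]\times[0,r]$ (moving both endpoints simultaneously and defining $G_r(s,t,\cdot)$ for all $s,t\le r$), whereas you concatenate two one-dimensional moves; since only the value at the terminal endpoint pair matters for $\Gamma(p,q,\cdot)$, this makes no substantive difference.
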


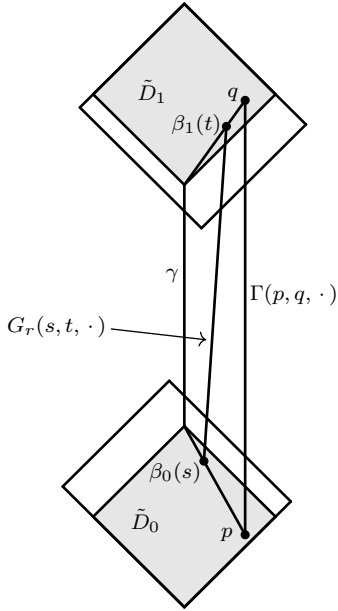
\begin{figure}
\centering
\begin{tikzpicture}[line cap=round,line join=round,x=0.8cm,y=0.8cm]
\clip(-3.5,-3.5) rectangle (3.5,7.5);
\fill[line width=1.pt,fill=black,fill opacity=0.10000000149011612] (0.,4.) -- (1.5,5.5) -- (0.,7.) -- (-1.5,5.5) -- cycle;
\fill[line width=1.pt,fill=black,fill opacity=0.10000000149011612] (0.,0.) -- (-1.5,-1.5) -- (0.,-3.) -- (1.5,-1.5) -- cycle;
\draw [line width=1.pt] (0.,0.)-- (0.,4.);
\draw [line width=1.pt] (2.,5.)-- (0.,7.);
\draw [line width=1.pt] (0.,-3.)-- (-2.,-1.);
\draw [line width=1.pt] (0.,4.)-- (1.5,5.5);
\draw [line width=1.pt] (1.5,5.5)-- (0.,7.);
\draw [line width=1.pt] (0.,7.)-- (-1.5,5.5);
\draw [line width=1.pt] (-1.5,5.5)-- (0.,4.);
\draw [line width=1.pt] (0.,0.)-- (-1.5,-1.5);
\draw [line width=1.pt] (-1.5,-1.5)-- (0.,-3.);
\draw [line width=1.pt] (0.,-3.)-- (1.5,-1.5);
\draw [line width=1.pt] (1.5,-1.5)-- (0.,0.);
\draw [line width=1.pt] (0.,0.)-- (1.,-1.8);
\draw [line width=1.pt] (0.,4.)-- (1.,5.4);
\draw [line width=1.pt] (1.,-1.8)-- (1.,5.4);
\draw [line width=1.pt] (0.32178169325697603,-0.5792070478625568)-- (0.6916095761375537,4.968253406592575);
\draw [line width=1.pt] (1.7512949797671777,-1.2487050202328223)-- (-0.24870502023282226,0.7512949797671777);
\draw [line width=1.pt] (-0.24870502023282226,0.7512949797671777)-- (-2.,-1.);
\draw [line width=1.pt] (1.7512949797671777,-1.2487050202328223)-- (0.,-3.);
\draw [line width=1.pt] (-1.7184005936154203,5.281599406384579)-- (0.,7.);
\draw [line width=1.pt] (-1.7184005936154203,5.281599406384579)-- (0.2815994063845796,3.2815994063845797);
\draw [line width=1.pt] (0.2815994063845796,3.2815994063845797)-- (2.,5.);
\draw [->,line width=0.5pt] (-1.2,1.6107940113352153) -- (0.3612952690805856,1.4315144091321663);
\begin{scriptsize}
\draw[color=black] (-0.2,2.5) node {$\gamma$};
\draw[color=black] (-0.5282484855600129,5.5503330916580556) node {$\tilde D_1$};
\draw[color=black] (-0.6376870376821366,-1.5631727962800157) node {$\tilde D_0$};
\draw [fill=black] (1.,-1.8) circle (1.5pt);
\draw[color=black] (0.7,-1.8) node {$p$};
\draw [fill=black] (1.,5.4) circle (1.5pt);
\draw[color=black] (0.8,5.5) node {$q$};
\draw[color=black] (1.8,2.1796256862966312) node {$\Gamma(p,q,\dotto)$};
\draw [fill=black] (0.32178169325697603,-0.5792070478625568) circle (1.5pt);
\draw[color=black] (-0.15,-0.77) node {$\beta_0(s)$};
\draw [fill=black] (0.6916095761375537,4.968253406592575) circle (1.5pt);
\draw[color=black] (0.2,5) node {$\beta_1(t)$};
\draw[color=black] (-2.1,1.65) node {$G_r(s,t,\dotto)$};
\end{scriptsize}
\end{tikzpicture}
\caption{Setup for the proof of \autoref{globlem}}
\end{figure}\label{figure:globlem}

\begin{proof}
    \ref{globlem1} Let $p\in \tilde{D}_0$ and $q\in \tilde{D}_1$ and let $\beta_0,\beta_1\colon [0,1]\to X$ be the unique timelike geodesics from $p$ to $\gamma(0)$ and from $\gamma(1)$ to $q$, respectively. We introduce a set $A \subseteq [0,1]$ consisting of all $r\in[0,1]$ for which there exists a continuous map $G_r\colon [0,r]\times[0,r]\times [0,1]\to X$ with $G_r(0,0,\dotto)=\gamma$ and such that $G_r(s,t,\dotto)$ is a geodesic from $\beta_0(s)$ to $\beta_1(t)$ for all $s,t \leq r$, see. \autoref{figure:globlem}.
    
    Our aim is to show that $A=[0,1]$. Clearly, $0\in A$. Now note that if $r\in A$, then also $[0,r)\subseteq A$. Consequently, $A$ is an interval. 
    Moreover, for any $r\in A\setminus \{1\}$, we can invoke \autoref{großesThm} to uniquely and continuously extend the function $G_r$ to the domain $[0,r+\varepsilon]\times[0,r+\varepsilon]\times[0,1]$ for some $\varepsilon>0$ small enough. This implies that for $r_1<r_2\in A$, the functions $G_{r_1}$ and $G_{r_2}$ must coincide on their common domain, and it shows that the set $A$ is open. We now argue that it is also closed.
    
    To this end, we set $r_0:=\sup A$ and show $r_0\in A$. We start by defining a map $g_{r_0}\colon[0,r_0)\times[0,r_0)\times[0,1]\to X$ which agrees with $G_r$ for all $r<r_0$. Our aim is to find a continuous extension of this map to $[0,r_0]\times[0,r_0]\times[0,1]$ such that the resulting new curves are again geodesics. If we fix $s\in[0,r_0)$ and choose a sequence $t_k\nearrow r_0$, we know that the timelike geodesics $g_{r_0}(s,t_k,\dotto)$ converge uniformly to a timelike curve $\alpha$ along some subsequence of $(t_k)_{k \in \N}$ by the limit curve theorem \cite[Thm.\ 3.7]{LLS}. In fact, \autoref{geodesiclimit} ensures that $\alpha$ is even a geodesic, but it is not yet clear whether the convergence holds along the entire sequence. To see that it does, choose a neighborhood $U$ of $\alpha$ and a neighborhood $U_1$ of $\alpha(1)$ as in \autoref{großesThm} and pick $K\in\mathbb N$ large enough so that $g_{r_0}(s,t_K,\dotto)\subseteq U$ and $g_{r_0}(s,t_K,1)=\beta_1(t_K)\in U_1$. Since $U_1$ is causally convex, we know that $\beta_1(t)\in U_1$ for all $t\in[t_K,r_0]$ and hence, for all such $t$, there exists a unique timelike geodesic from $\beta_0(s)$ to $\beta_1(t)$ which remains in $U$. We have already seen that these geodesics vary continuously and that they converge uniformly to $\alpha$ as $t\nearrow r_0$. It remains to show that they coincide with $g_{r_0}(s,t,\dotto)$. Suppose the contrary, and let $\overline{t}$ be the infimum of the set of all $t$ on which they disagree. Applying \autoref{großesThm} to $g_{r_0}(s,\overline{t},\dotto)$ we find a neighborhood of this curve in which geodesics between $\beta_0(s)$ and $\beta_1(t)$ are unique, which is a contradiction. 

    Repeating this for all $s\in[0,r_0)$, we can extend $g_{r_0}$ to $[0,r_0]\times[0,r_0)\times[0,1]$ and it is a simple application of \autoref{großesThm} that this extension is continuous. Now fix $t\in[0,r_0)$ and choose a sequence $s_k\nearrow r_0$ to define a geodesic from $\beta_0(r_0)$ to $\beta_1(t)$ as the limit of $g_{r_0}(s_k,t,\dotto)$ as above. Taken together, this allows us to continuously extend $g_{r_0}$ to $[0,r_0]\times[0,r_0]\times[0,1]$ implying that $r_0\in A$ and hence $A$ is closed.

    This proves that the map $G_1$ is defined and we set $\Gamma(p,q,\dotto):=G_1(1,1,\dotto)$. To finish the proof of \ref{globlem1}, we have to show that this assignment is continuous, i.e. for any $(p,q)\in\tilde D_0\times\tilde D_1$ and any sequence $(p_k,q_k)\to (p,q)$ we have that $\Gamma(p_k,q_k,\dotto)\to\Gamma(p,q,\dotto)$ uniformly. For this, we again consider the neighborhoods $U,U_0,U_1$ of the curve $\Gamma(p,q,\dotto)$ as well as its endpoints $p$ and $q$ introduced in \autoref{großesThm}. By \autoref{großesThmc} of the same theorem, we know that the unique geodesics $\gamma_{p_kq_k}$ joining $p_k$ to $q_k$ within $U$ converge uniformly to $\Gamma(p,q,\dotto)$, so if we can show that $\gamma_{p_kq_k}=\Gamma(p_k,q_k,\dotto)$ for $k$ large enough, we are done. To this end, we define neighborhoods $U^t,U_0^t,U_1^t$ as in \autoref{großesThm} for all pairs of points $(\beta_0(t),\beta_1(t))$ and the respective geodesic $\Gamma(\beta_0(t),\beta_1(t),\dotto)$ between them.
    We set $V_0:=\bigcup_{t} U_0^t, \,V_1:=\bigcup_{t} U_1^t$ and note that they are open neighborhoods of $\beta_0$ and $\beta_1$. Recalling that geodesics vary continuously in $D_0$ and $D_1$ by \autoref{contvargeod}, we conclude that, for any fixed $k$ large enough, the geodesics $\tilde{\beta}_0$ and $\tilde{\beta}_1$ connecting ${p}_k$ and $q_k$ to the endpoints of $\gamma$ do not leave $V_0$ and $V_1$, respectively. Hence, for each ${t}\in[0,1]$ there exists an $s\in[0,1]$ such that $\tilde{\beta}_0({t})\in U^s_0$ and $\tilde{\beta}_1({t})\in U_1^s$. We set 
    \begin{equation*}
        J^s:=\{t \in [0,1] \, \vert \, \tilde{\beta}_0({t})\in U^s_0, \,\tilde{\beta}_1({t})\in U_1^s \}.
    \end{equation*}
    Now, by \autoref{großesThm}, for each $t \in J^s$, there exists a unique geodesic $\tilde{\gamma}_t^s$ connecting $\tilde{\beta}_0(t)$ to $\tilde{\beta}_1(t)$ which is entirely contained in $U^s$. A priori, $\tilde{\gamma}_t^s$ need not coincide with $\Gamma(\tilde{\beta}_0(t),\tilde{\beta}_1(t),\dotto)$. However, similarly to what we have done before, one can show that the set $I^s$ of parameter values $t \in [0,1]$ on which they do coincide is connected (by causal convexity of $U_0^s$ and $U_1^s$), open and closed as a subset of $J^s$ and must therefore either be empty or agree with $J^s$. Now, as for $t=s=0$, we have $\tilde{\gamma}^0_0=\gamma=\Gamma(\tilde{\beta}_0(0),\tilde{\beta}_1(0),\dotto)$, we see that $I^0 \neq \varnothing$. Hence, $I^s=J^s$ for all $s$ such that $J^s\cap J^0 \neq \varnothing$ and, by iteration, $I^s=J^s$ for all $s$ for which $\tilde{\beta}_0$ and $\tilde{\beta}_1$ pass through $U^s_0$ and $U^s_1$. In particular, the curves coincide in $U$, the neighborhood of $\Gamma(p,q,\dotto)$. Consequently, we have $\gamma_{p_kq_k}=\Gamma(p_k,q_k,\dotto)$ which is what we needed to prove continuity of $\Gamma$.

    Finally, item \ref{globlem2} follows analogously by showing that the set of all $s\in[0,1]$ for which $H(s,\dotto)$ and $\Gamma(H(s,0),H(s,1),\dotto)$ coincide is nonempty, open and closed, again invoking \autoref{großesThm}. 
\end{proof}

Without further constraints on our spaces, there is no hope of proving uniqueness of geodesics between arbitrary timelike related points. Indeed, the simple example of a cylinder, whose curvature is clearly bounded from above by zero, but on which we can easily find timelike related points connected by two 
geodesics, shows that an additional topological assumption is necessary.

\begin{defin}
    Let $X$ be a Lorentzian pre-length space and let $\gamma_0,\gamma_1\colon [0,1]\to X$ be two future-directed timelike curves. A \textit{timelike homotopy} between $\gamma_0$ and $\gamma_1$ is a continuous map
    \begin{equation*}
        H\colon[0,1]\times[0,1]\to X
    \end{equation*}
    with $H(0,\dotto)=\gamma_0$, $H(1,\dotto)=\gamma_1$ and such that $H(s,\dotto)$ is future-directed timelike for any $s\in[0,1]$. If $X$ is timelike path-connected and any pair of future-directed timelike curves with common endpoints are timelike path-homotopic, then $X$ is called \emph{future one-connected}.
\end{defin}

The notion of future one-connectedness is logically independent of simple connectedness, see the discussion in \cite[Section 10.1]{BeemEhrlich}. 

Now, if two timelike related points in a future one-connected space are joined by two distinct geodesics, then, by definition, we can find a homotopy between those geodesics consisting of timelike curves. Using \autoref{globlem}, we will now show that this homotopy can be assumed to consist only of \emph{geodesics}. In fact, we have the following even more general statement:

\begin{prop}\label{geodhomotop}
    Let $X$ be a globally hyperbolic, strongly causal, timelike path-connected and locally causally closed regular Lorentzian pre-length space, and let $H_0\colon[0,1]\times[0,1]\to X$ be a timelike homotopy between two future-directed timelike curves $\gamma_0$ and $\gamma_1$. If $X$ is locally concave, then there exists a timelike homotopy $H_1$ consisting of future-directed timelike geodesics $H_1(s,\dotto)$ between $H_0(s,0)$ and $H_0(s,1)$ such that $H_0$ is homotopic to $H_1$ via a timelike homotopy $G$. Moreover, if $\gamma_0$ and $\gamma_1$ are geodesics, then $\gamma_0=H_0(0,\dotto)=H_1(0,\dotto)$ and $\gamma_1=H_0(1,\dotto)=H_1(1,\dotto)$.
\end{prop}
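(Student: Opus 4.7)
\emph{Strategy.} The plan is to first replace each slice $H_0(s,\cdot)$ by a broken timelike geodesic whose segments are the unique local geodesics within a grid of comparison diamonds, and then to merge adjacent segments one by one into a single timelike geodesic by applying \autoref{großesThm}. Each replacement and each merging step is accompanied by an explicit timelike sub-homotopy; concatenating them all yields the required homotopy $G$ together with the geodesic family $H_1$.

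\emph{Broken-geodesic approximation and first homotopy.} Using compactness of $[0,1]^2$ and uniform continuity of $H_0$, choose a partition $0 = t_0 < \cdots < t_N = 1$ fine enough that, for every $s\in[0,1]$ and every $i$, the piece $H_0(s,\cdot)|_{[t_i,t_{i+1}]}$ lies inside a timelike comparison diamond. Writing $p_i(s):=H_0(s,t_i)$, \autoref{uniquemaxlocal} provides the unique timelike geodesic $\sigma_i(s,\cdot)$ from $p_i(s)$ to $p_{i+1}(s)$ inside this diamond, varying continuously in $s$ by the argument of \autoref{contvargeod} applied inside the diamond. Let $\beta(s,\cdot):=\sigma_0(s,\cdot)\ast\cdots\ast\sigma_{N-1}(s,\cdot)$. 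To homotope $H_0$ to $\beta$ through timelike curves, define within each comparison diamond the \emph{sliding-foot} family: for $r\in[0,1]$, the curve $\rho_r^i(s,\cdot)$ coincides with $H_0(s,\cdot)$ on $[t_i,\,t_i+(1-r)(t_{i+1}-t_i)]$ and, on the remainder of $[t_i,t_{i+1}]$, with the unique local timelike geodesic (reparametrized) from the intermediate point $H_0\bigl(s,\,t_i+(1-r)(t_{i+1}-t_i)\bigr)$ to $p_{i+1}(s)$. Each $\rho_r^i(s,\cdot)$ is a concatenation of two future-directed timelike curves, equals $H_0(s,\cdot)|_{[t_i,t_{i+1}]}$ at $r=0$ and $\sigma_i(s,\cdot)$ at $r=1$. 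Joining over $i$ produces a timelike homotopy $G_1$ from $H_0$ to $\beta$.

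\emph{Iterated merging.} We now construct timelike geodesics $\delta^K(s,\cdot)$ from $p_0(s)$ to $p_K(s)$, continuously in $s$, by induction on $K\in\{1,\ldots,N\}$. Set $\delta^1:=\sigma_0$. For the inductive step, apply \autoref{großesThm} to $\delta^K(s,\cdot)$; after possibly further refining the $t$-partition we may assume that $\sigma_K(s,[0,1])$ lies inside the neighborhood $U_L$ of $p_K(s)$ produced by the theorem. By \autoref{großesThm}\ref{großesThma}, for each $r\in[0,1]$ there is a unique timelike geodesic $\gamma_r(s,\cdot)$ from $p_0(s)$ to $\sigma_K(s,r)$, and by \autoref{großesThm}\ref{großesThmc} this depends continuously on $r$. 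At $r=0$ it recovers $\delta^K$, and we set $\delta^{K+1}:=\gamma_1$. Concatenating each $\gamma_r(s,\cdot)$ with the tail $\sigma_K(s,\cdot)|_{[r,1]}$ yields a timelike curve from $p_0(s)$ to $p_{K+1}(s)$, and this furnishes a timelike homotopy from the broken geodesic $\delta^K\ast\sigma_K$ to $\delta^{K+1}$. Concatenating all $N-1$ merging homotopies with $G_1$ gives the desired $G$, and $H_1:=\delta^N$. If $\gamma_0$ is itself a geodesic, \autoref{uniquemaxlocal} forces every $\sigma_i(0,\cdot)$ to coincide with $\gamma_0|_{[t_i,t_{i+1}]}$ up to reparametrization, and the same uniqueness makes every merging step reproduce $\gamma_0$, so $H_1(0,\cdot)=\gamma_0$; the case $s=1$ is identical.

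\emph{Main obstacle.} The principal difficulty is propagating the inductively defined $\delta^K$ continuously over the full range $s\in[0,1]$: \autoref{großesThm}\ref{großesThma},\ref{großesThmc} only yield local existence and continuity of the extension around each individual $s_0\in[0,1]$, so these local definitions have to be patched along a finite subcover of $[0,1]$ using the local uniqueness of \autoref{großesThm} to ensure agreement on overlaps, mirroring the connectedness template used inside the proof of \autoref{globlem}. Simultaneously, the $t$-partition, the comparison diamonds, and the neighborhoods $U_0,U_L$ from \autoref{großesThm} must be coordinated so that at every stage of the induction the image of $\sigma_K(s,\cdot)$ lies uniformly in the current $U_L$, which constitutes the main bookkeeping effort of the argument.
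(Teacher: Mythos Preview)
Your approach is conceptually the same as the paper's: both straighten each slice $H_0(s,\cdot)$ by growing a geodesic segment out of $H_0(s,0)$ across a chain of comparison diamonds while carrying the unmodified tail, and $G$ is the resulting two-parameter family. The paper differs organisationally in that it skips your preliminary broken-geodesic stage, writing $G(r,s,\cdot)=\alpha_{rs}\ast \gamma_s|_{[r,1]}$ directly with $\alpha_{rs}$ the geodesic from $\gamma_s(0)$ to $\gamma_s(r)$, and---more importantly---it packages the extension step as \autoref{globlem}, invoking that lemma once per diamond and using \autoref{globlem}\ref{globlem2} to verify that the outcome is independent of all auxiliary choices (the parameters $r_i$, the subcover $\mathcal D_{\bar s}$, and the base point $\bar s$).

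The genuine gap in your write-up is the phrase ``after possibly further refining the $t$-partition we may assume that $\sigma_K(s,[0,1])$ lies inside the neighborhood $U_L$.'' This is not bookkeeping; it is exactly the content of \autoref{globlem}. The neighborhood $U_L$ produced by \autoref{großesThm} depends, through the $\varepsilon$ of \autoref{deltaexists}, on the geodesic $\delta^K(s,\cdot)$ itself, which in turn depends on the partition already in place; so you cannot fix the refinement in advance, and if you refine adaptively at each step there is no a~priori reason the process terminates in finitely many subdivisions. What actually carries the extension from $p_K(s)$ to $p_{K+1}(s)$ is an open--closed argument in the $r$-direction: the set of $r\in[0,1]$ for which the family can be continuously extended to a geodesic ending at $\sigma_K(s,r)$ is open by \autoref{großesThm} and closed by the limit curve theorem together with \autoref{geodesiclimit}, hence all of $[0,1]$. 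This is precisely the proof of \autoref{globlem}. Your ``Main obstacle'' paragraph correctly names this connectedness template but places it only in the $s$-variable (patching overlapping $s$-intervals); the primary instance is in the $r$-variable, and once that is supplied your merging step \emph{is} an application of \autoref{globlem} and the two arguments coincide.
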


\begin{figure}
\begin{tikzpicture}[line cap=round,line join=round,x=0.35cm,y=0.35cm]
\clip(-4.,-11.) rectangle (17.,11.);
\fill[line width=1.pt,fill=black,fill opacity=0.10000000149011612] (8.,-10.) -- (12.,-6.) -- (6.,0.) -- (2.,-4.) -- cycle;
\fill[line width=1.pt,fill=black,fill opacity=0.10000000149011612] (4.,-4.) -- (9.,1.) -- (5.,5.) -- (0.,0.) -- cycle;
\fill[line width=1.pt,fill=black,fill opacity=0.10000000149011612] (6.,2.) -- (3.,5.) -- (8.,10.) -- (11.,7.) -- cycle;
\draw [shift={(8.,0.)},line width=1.pt]  plot[domain=2.356194490192345:3.9269908169872414,variable=\t]({1.*11.313708498984761*cos(\t r)+0.*11.313708498984761*sin(\t r)},{0.*11.313708498984761*cos(\t r)+1.*11.313708498984761*sin(\t r)});
\draw [line width=1.pt] (0.,8.)-- (16.,8.);
\draw [line width=1.pt] (0.,-8.)-- (16.,-8.);
\draw [shift={(24.,0.)},line width=1.pt]  plot[domain=2.356194490192345:3.9269908169872414,variable=\t]({1.*11.313708498984761*cos(\t r)+0.*11.313708498984761*sin(\t r)},{0.*11.313708498984761*cos(\t r)+1.*11.313708498984761*sin(\t r)});
\draw [shift={(16.,0.)},line width=1.pt]  plot[domain=2.356194490192345:3.9269908169872414,variable=\t]({1.*11.313708498984761*cos(\t r)+0.*11.313708498984761*sin(\t r)},{0.*11.313708498984761*cos(\t r)+1.*11.313708498984761*sin(\t r)});
\draw [line width=1.pt] (8.,-10.)-- (12.,-6.);
\draw [line width=1.pt] (12.,-6.)-- (6.,0.);
\draw [line width=1.pt] (6.,0.)-- (2.,-4.);
\draw [line width=1.pt] (2.,-4.)-- (8.,-10.);
\draw [line width=1.pt] (4.,-4.)-- (9.,1.);
\draw [line width=1.pt] (9.,1.)-- (5.,5.);
\draw [line width=1.pt] (5.,5.)- - (0.,0.);
\draw [line width=1.pt] (0.,0.)-- (4.,-4.);
\draw [line width=1.pt] (6.,2.)-- (3.,5.);
\draw [line width=1.pt] (3.,5.)-- (8.,10.);
\draw [line width=1.pt] (8.,10.)-- (11.,7.);
\draw [line width=1.pt] (11.,7.)-- (6.,2.);
\begin{scriptsize}
\draw[color=black] (-2.6369389155826764,0.6897111289998015) node {$\gamma_0$};
\draw[color=black] (13.265871261256824,0.6897111289998015) node {$\gamma_1$};
\draw[color=black] (5.28713304277594,0.6897111289998015) node {$\gamma_{\bar s}$};
\draw[color=black] (7.46315255690709,-4.814336436694856) node {$D_0$};
\draw[color=black] (2.865779788767057,0.16304242961114095) node {$D_1$};
\draw[color=black] (8.5,6.501762353594048) node {$D_2$};
\end{scriptsize}
\end{tikzpicture}
\begin{tikzpicture}[line cap=round,line join=round,x=0.35cm,y=0.35cm]
\clip(-4.,-11.) rectangle (17.,11.);
\fill[line width=1.pt,fill=black,fill opacity=0.10000000149011612] (4.453777838068226,-6.453777838068226) -- (5.869797834001266,-5.037757842135186) -- (9.416019995933041,-8.583980004066959) -- (8.,-10.) -- cycle;
\fill[line width=1.pt,fill=black,fill opacity=0.10000000149011612] (8.,-8.) -- (11.,-5.) -- (6.,0.) -- (3.,-3.) -- cycle;
\fill[line width=1.pt,fill=black,fill opacity=0.10000000149011612] (4.913835442747027,-2.257643773828816) -- (1.3280958344591058,1.3280958344591058) -- (5.,5.) -- (8.585739608287922,1.4142603917120784) -- cycle;
\fill[line width=1.pt,fill=black,fill opacity=0.10000000149011612] (5.298482978732162,3.671176030036526) -- (9.813653474347818,8.186346525652182) -- (8.,10.) -- (3.484829504384344,5.484829504384344) -- cycle;
\draw [shift={(8.,0.)},line width=1.pt]  plot[domain=2.356194490192345:3.9269908169872414,variable=\t]({1.*11.313708498984761*cos(\t r)+0.*11.313708498984761*sin(\t r)},{0.*11.313708498984761*cos(\t r)+1.*11.313708498984761*sin(\t r)});
\draw [line width=1.pt] (0.,8.)-- (16.,8.);
\draw [line width=1.pt] (0.,-8.)-- (16.,-8.);
\draw [shift={(24.,0.)},line width=1.pt]  plot[domain=2.356194490192345:3.9269908169872414,variable=\t]({1.*11.313708498984761*cos(\t r)+0.*11.313708498984761*sin(\t r)},{0.*11.313708498984761*cos(\t r)+1.*11.313708498984761*sin(\t r)});
\draw [shift={(16.,0.)},line width=1.pt]  plot[domain=2.356194490192345:3.9269908169872414,variable=\t]({1.*11.313708498984761*cos(\t r)+0.*11.313708498984761*sin(\t r)},{0.*11.313708498984761*cos(\t r)+1.*11.313708498984761*sin(\t r)});
\draw [line width=1.pt] (4.453777838068226,-6.453777838068226)-- (5.869797834001266,-5.037757842135186);
\draw [line width=1.pt] (5.869797834001266,-5.037757842135186)-- (9.416019995933041,-8.583980004066959);
\draw [line width=1.pt] (9.416019995933041,-8.583980004066959)-- (8.,-10.);
\draw [line width=1.pt] (8.,-10.)-- (4.453777838068226,-6.453777838068226);
\draw [line width=1.pt] (8.,-8.)-- (11.,-5.);
\draw [line width=1.pt] (11.,-5.)-- (6.,0.);
\draw [line width=1.pt] (6.,0.)-- (3.,-3.);
\draw [line width=1.pt] (3.,-3.)-- (8.,-8.);
\draw [line width=1.pt] (4.913835442747027,-2.257643773828816)-- (1.3280958344591058,1.3280958344591058);
\draw [line width=1.pt] (1.3280958344591058,1.3280958344591058)-- (5.,5.);
\draw [line width=1.pt] (5.,5.)-- (8.585739608287922,1.4142603917120784);
\draw [line width=1.pt] (8.585739608287922,1.4142603917120784)-- (4.913835442747027,-2.257643773828816);
\draw [line width=1.pt] (5.298482978732162,3.671176030036526)-- (9.813653474347818,8.186346525652182);
\draw [line width=1.pt] (9.813653474347818,8.186346525652182)-- (8.,10.);
\draw [line width=1.pt] (8.,10.)-- (3.484829504384344,5.484829504384344);
\draw [line width=1.pt] (3.484829504384344,5.484829504384344)-- (5.298482978732162,3.671176030036526);
\draw [line width=1.pt] (8.,-8.)-- (4.913835442747027,-2.257643773828816);
\draw [line width=1.pt] (8.,-8.)-- (5.298482978732162,3.671176030036526);
\draw [line width=1.pt] (8.,-8.)-- (8.,8.);
\draw [->,line width=0.5pt] (10,-2.5)-- (6,-4);
\draw [->,line width=0.5pt] (10,0)-- (6,1.5);
\draw [->,line width=0.5pt] (9.7,4.1)-- (8.1,4.1);
\begin{scriptsize}
\draw[color=black] (-2.6369389155826764,0.6897111289998015) node {$\gamma_0$};
\draw[color=black] (13.265871261256824,0.6897111289998015) node {$\gamma_1$};
\draw[color=black] (3.7,-6.5) node {$\tilde D_0^-$};
\draw[color=black] (2.3,-3) node {$\tilde D_0^+$};
\draw[color=black] (0.6,1.3) node {$\tilde D_1$};
\draw[color=black] (2.794406243882176,5.5) node {$\tilde D_2$};
\draw[color=black] (10.5,-2.3) node {$\alpha_{r_0\bar s}$};
\draw[color=black] (10.5,-0.4) node {$\alpha_{r_1\bar s}$};
\draw [fill=black] (8.,8.) circle (1.5pt);
\draw[color=black] (10.5,4) node {$\alpha_{1\bar s}$};
\draw [fill=black] (4.913835442747027,-2.257643773828816) circle (1.5pt);
\draw [fill=black] (5.298482978732162,3.671176030036526) circle (1.5pt);
\draw [fill=black] (8,-8) circle (1.5pt);
\end{scriptsize}
\end{tikzpicture}
\caption{Setup for the proof of \autoref{geodhomotop}}
\end{figure}\label{figure:geodhomotop}

\begin{proof}
    Cover the compact image of the homotopy $H_0$ with finitely many timelike comparison diamonds, recalling that these form a basis of the topology of $X$ by \autoref{strongcausalityrem}, and denote the collection of these sets by $\mathcal{D}:=\{D_0, \dots, D_n\}$. Now, fix $\bar s\in[0,1]$ and assume, without loss of generality, that $\mathcal{D}_{\bar s}=\{D_0,\ldots,D_k\}$ with $k\leq n$ is a minimal subcollection of $\mathcal{D}$ which still covers the image of $\gamma_{\bar s}:=H_0(\bar s,\dotto)$. We will construct the homotopy $H_1$ using \autoref{globlem}, but first we need a finer cover of $\gamma_{\bar s}$ to do so. Without loss of generality, let $\gamma_{\bar s}(0)\in D_0$ and pick $r_0\in(0,1]$ such that $\gamma_{\bar s}(r_0)\in D_0$. Now, define
    \begin{equation*}
        \widetilde{D}_0^-:=I^-(\gamma_{\bar s}(r_0))\cap D_0\hspace{1cm}\text{and}\hspace{1cm}\widetilde{D}_0^+:=I^+(\gamma_{\bar s}(0))\cap D_0.
    \end{equation*}
    Next, we pick $r_1>r_0$ such that $\gamma_{\bar s}(r_1)\in D_0\cap D_1$ and define
    \begin{equation*}
        \widetilde{D}_1:= I^+(\gamma_{\bar s}(r_1))\cap D_1.
    \end{equation*}
    We continue this procedure of picking $r_i>r_{i-1}$ such that $\gamma_{\bar s}(r_i)\in D_{i-1}\cap D_i$ and successively define $\widetilde{D}_i:= I^+(\gamma_{\bar s}(r_i))\cap D_i$. Thereby, we obtain a new open cover $\widetilde{\mathcal{D}}_{\bar s}=\{\widetilde{D}_0^-,\widetilde{D}_0^+,\widetilde{D}_1,\ldots,\widetilde{D}_k\}$ of $\gamma_{\bar s}$ and, moreover, of any curve $\gamma_s$ with $s$ close enough to $\bar s$.   
    Now, whenever the parameters $s$ and $r$ are such that $\gamma_s(0)\ll\gamma_s(r)$ and both points contained in $D_0$, there exists a unique timelike geodesic $\alpha_{rs}$ connecting the two points and these geodesics vary continuously in $s$ and $r$ by \autoref{uniquemaxlocal} and \autoref{contvargeod}. We want to continue this construction beyond $D_0$. To this end, denote by $\eta_1$ the unique geodesic joining $\gamma_{\bar s}(r_0)$ to $\gamma_{\bar s}(r_1)$ (both of which lie in a common comparison neighborhood $D_0$) and apply \autoref{globlem} to obtain a family of continuously varying geodesics between any pair of points in $\widetilde{D}_0^-\times \widetilde{D}_1$. In particular, we can define the geodesic $\alpha_{rs}$ connecting $\gamma_s(0)$ to $\gamma_s(r)$ whenever these points lie in the respective neighborhoods. We now continue this procedure as follows: If we have already shown that geodesics between points in $\widetilde{D}_0^-$ and $\widetilde{D}_{i-1}$ exist and vary continuously with their endpoints, we fix a reference geodesic $\eta_i$ from $\gamma_{\bar s}(0)$ to $\gamma_{\bar s}(r_i)$ and apply \autoref{globlem} to obtain the same result for pairs of points in $\widetilde{D}_0^- \times \widetilde{D}_i$. Note that if the point $\gamma_s(r)$ lies in the intersection $\widetilde{D}_{i-1}\cap\widetilde{D}_i$ then the choice of a geodesic to this point is independent of whether we regard it as an element of $\widetilde{D}_{i-1}$ or $\widetilde{D}_i$ by \autoref{globlem}\ref{globlem2}. Hence, for all $\gamma_s$ covered by $\widetilde{\mathcal{D}}_{\bar s}$ we can define continuously varying geodesic $\alpha_{rs}$ from $\gamma_s(0)$ to $\gamma_s(r)$, see \autoref{figure:geodhomotop}.

    Observe that this construction is independent of the choice of parameters $r_i$ and of the collection of sets $\mathcal{D}_{\bar s}$, as can easily be proven using \autoref{globlem}\ref{globlem2}. Moreover, for $s\neq\bar s$, the definition of the curves $\alpha_{rs}$ does not depend on the fact that the sets $\widetilde{D}_i=I^+(\gamma_{\bar s}(r_i))\cap D_i$ were always defined using the same fixed $\bar s$ either. Indeed, one could instead pick parameters $r_i'$ such that $\gamma_s(r_i')\in \widetilde{D}_{i-1}\cap\widetilde{D}_i$ and replace $\widetilde{D}_i$ with $\widetilde{D}'_i:=I^+(\gamma_s(r_i'))\cap \widetilde D_i$. Then on $\widetilde{D}'_i$ the two a priori independent constructions of $\alpha_{rs}$ must coincide by \autoref{globlem}\ref{globlem2}.

    Finally, we set $G(r,s,\dotto):=\alpha_{rs}*\gamma_s\big|_{[r,1]}$. This is continuous by construction, and $H_1(s,t):=G(1,s,t)$ is the timelike geodesic homotopy between $\gamma_0$ and $\gamma_1$ we were after. All other claims immediately follow from the construction of $G$. \qedhere
    
\end{proof}

\begin{prop}\label{globalconcav}
    Let $X$ be a globally hyperbolic regular Lorentzian length space. If $X$ is locally concave and every pair of timelike related points $x,y \in X$ can be joined by a unique timelike geodesic $\gamma_{x,y}$, then:
    \begin{enumerate}[label=(\roman*)]
        \item Each timelike geodesic $\gamma_{x,y}$ is length-maximizing, and \label{globalconcav1}
        \item\label{globalconcav2} $X$ is globally concave, i.e., it serves as a comparison neighborhood in the sense of \autoref{def:Concavity}
    \end{enumerate}
\end{prop}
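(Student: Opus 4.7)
My plan is to handle (i), (ii)(a), and (ii)(b) quickly and focus the effort on the concavity claim in (ii). Part (i) is essentially a uniqueness argument: given $x \ll y$, the generalized Avez-Seifert theorem \cite[Thm.\ 3.30]{LLS} for globally hyperbolic Lorentzian length spaces produces a length-maximizing causal curve from $x$ to $y$; by regularity this maximizer is timelike, hence a timelike geodesic, and the uniqueness hypothesis then forces it to coincide with $\gamma_{x,y}$. The continuity of $\tau$ required in (ii)(a) is a standard consequence of global hyperbolicity for Lorentzian length spaces, and (ii)(b) is immediate from (i).

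For the concavity condition (iii) of \autoref{def:Concavity} applied to the entire space, given two timelike maximizers $\alpha,\beta\colon[0,1]\to X$ with the endpoint hypothesis of \autoref{def:Concavity}\ref{concav3}, I would interpolate between them in two stages via a ``diagonal'' geodesic. Let $\mu_0$ and $\mu_1$ be the unique maximizers from $\alpha(0)$ to $\beta(0)$ and from $\alpha(1)$ to $\beta(1)$ respectively (either possibly degenerate), and define families $\gamma^{(1)}_s$ as the unique geodesic from $\alpha(0)$ to $\mu_1(s)$ and $\gamma^{(2)}_s$ as the unique geodesic from $\mu_0(s)$ to $\beta(1)$, for $s\in[0,1]$. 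Push-up shows these are well-defined and \autoref{contvargeod} gives continuity in $s$; setting $\delta := \gamma^{(1)}_1 = \gamma^{(2)}_0$, we have $\gamma^{(1)}_0 = \alpha$ and $\gamma^{(2)}_1 = \beta$, so the two families chain $\alpha$ to $\beta$ through the diagonal $\delta$ from $\alpha(0)$ to $\beta(1)$.

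For each $s$, applying \autoref{großesThm} to $\gamma^{(i)}_s$ yields a neighborhood into which nearby family members fit by continuous variation, and the global uniqueness hypothesis identifies the local geodesic from \autoref{großesThm}\ref{großesThma} with $\gamma^{(i)}_{s'}$; \autoref{großesThm}\ref{großesThmd} then gives concavity of $t \mapsto \tau(\gamma^{(i)}_s(t), \gamma^{(i)}_{s'}(t))$, with explicit boundary values determined by $\mu_0$ and $\mu_1$ via the standard parametrization $\tau(\mu_j(s),\mu_j(s')) = (s' - s)\,\tau(\alpha(j),\beta(j))$. A compactness argument provides a finite partition $0 = s_0 < s_1 < \cdots < s_n = 1$ on which the local concavity applies to each consecutive pair; summing and telescoping via the reverse triangle inequality (justified at interior $t$ by the strict orderings $\gamma^{(i)}_{s_j}(t) \ll \gamma^{(i)}_{s_{j+1}}(t)$ that the concavity estimates themselves produce) yields $\tau(\alpha(t),\delta(t)) \geq t\,\tau(\alpha(1),\beta(1))$ and $\tau(\delta(t),\beta(t)) \geq (1-t)\,\tau(\alpha(0),\beta(0))$. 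A final application of the reverse triangle inequality to the causal chain $\alpha(t) \leq \delta(t) \leq \beta(t)$ then adds these two estimates to produce the desired concavity inequality.

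I expect the main obstacle to be verifying \autoref{großesThm}\ref{großesThmd}'s hypotheses for each consecutive pair in the interpolation. This requires showing that for each $s$ there exists $\varepsilon(s)>0$ such that $\gamma^{(i)}_{s'}$ lies in the comparison neighborhood of $\gamma^{(i)}_s$ produced by \autoref{großesThm} whenever $|s-s'| < \varepsilon(s)$, and that the geodesic furnished by \autoref{großesThm}\ref{großesThma} inside that neighborhood coincides with the globally unique $\gamma^{(i)}_{s'}$. While continuous variation and global uniqueness together should deliver this, cleanly threading the identification between local and global geodesics is the technically most delicate step; the ensuing telescoping via the reverse triangle inequality is then a routine manipulation.
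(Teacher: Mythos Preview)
Your proposal is correct and follows essentially the same approach as the paper: both handle (i) via Avez--Seifert plus uniqueness, dispatch (ii)(a)--(b) trivially, and prove global concavity by interpolating through the diagonal geodesic from $\alpha(0)$ to $\beta(1)$, treating the shared-startpoint and shared-endpoint families separately, applying \autoref{großesThm}\ref{großesThmd} on a finite partition obtained by compactness, and telescoping via the reverse triangle inequality. The identification of the local geodesic from \autoref{großesThm}\ref{großesThma} with the global $\gamma^{(i)}_{s'}$ that you flag as delicate is handled in the paper by exactly the mechanism you describe (global uniqueness forces them to agree once the endpoint lies in the corresponding $U_1^{s_i}$), so your anticipated obstacle is not a genuine gap.
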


\begin{proof}
    \ref{globalconcav1} By the generalized Avez-Seifert Theorem \cite[Thm.\ 3.30]{LLS}, there exists a maximizing curve between every pair of timelike related points. Since, by assumption, timelike geodesics are unique, any such geodesic must be maximizing. 
    
    \ref{globalconcav2} We need to check items \ref{concav1}-\ref{concav3} in \autoref{def:Concavity}. The first item, i.e., continuity of $\tau$, holds by global hyperbolicity (see \cite[Thm.\ 3.28]{LLS}), while the second one follows from the above. It remains to prove the concavity of the time separation function. For any pair of timelike related points $x,y\in X$, denote by $\gamma_{x,y}\colon [0,1]\to X$ the unique timelike maximizer from $x$ to $y$. We start off by proving 
    \begin{equation}\label{globalconcaveq}
        \tau(\gamma_{x,y_0}(t), \gamma_{x,y_1}(t)) \geq t \tau(y_0, y_1), \hspace{1cm}t\in[0,1],
    \end{equation}
    for all $x,y_0,y_1\in X$ with $y_0\ll y_1$.

    To do so, we connect $y_0$ and $y_1$ by a timelike maximizer $\beta\colon [0,1]\to X$ 
    and consider the homotopy 
    \begin{equation*}
    H\colon [0,1]\times[0,1]\to X\hspace{1cm}(s,t)\mapsto\gamma_{x,\beta(s)}(t).
    \end{equation*}
    For any fixed $s\in[0,1]$, \autoref{großesThm} yields neighborhoods $U^s$ of $\gamma_{x,\beta(s)}$ and $U^s_1$ of $\beta(s)$. Clearly, $(U_1^{s})_{s\in[0,1]}$ covers $\beta([0,1])$ and since this set is compact, there are finitely many parameters $0=s_0<s_1<\cdots<s_n=1$ such that $(U_1^{s_i})_{0\leq i\leq n}$ is still a cover. Note that $H([0,1],[0,1])$ is thereby covered by $(U^{s_i})_{0\leq i\leq n}$ since $\beta(s)\in U^{s_i}_1$ implies $\gamma_{x,\beta(s)}(t)\in U^{s_i}$. For $1\leq i\leq n$ pick parameters $\lambda_i\in[0,1]$ such that $\beta(\lambda_i)\in U_1^{s_{i-1}}\cap U_1^{s_i}$ and set $\lambda_0=0$, $\lambda_{n+1}=1$. Then for $1\leq i\leq n-1$, both the curve $\gamma_{x,\beta(\lambda_i)}$ and the curve $\gamma_{x,\beta(\lambda_{i+1})}$ are contained in $U^{s_i}$ so we can apply \autoref{großesThm}\ref{großesThmd} to infer
    \begin{equation*}
        \tau(\gamma_{x,\beta(\lambda_i)}(t),\gamma_{x,\beta(\lambda_{i+1})}(t))\geq t\tau(\beta(\lambda_i),\beta(\lambda_{i+1})).
    \end{equation*}
    Summing up all these inequalities, we get 
    \begin{equation*}
        \tau(\gamma_{x,y_0}(t), \gamma_{x,y_1}(t)) \geq \sum_{i=0}^n \tau(\gamma_{x,\beta(\lambda_i)}(t), \gamma_{x,\beta(\lambda_{i+1})}(t)) \geq t  \sum_{i=0}^n \tau(\beta(\lambda_i), \beta(\lambda_{i+1})) = t \tau(y_0, y_1),
    \end{equation*}
    which gives \eqref{globalconcaveq}. Analogously, we can prove $\tau(\gamma_{x_0,y}(t),\gamma_{x_1,y}(t))\geq t\tau(x_0,x_1)$ for $x_0\ll x_1$. For the general case with $x_0\ll x_1$ and $y_0\ll y_1$, the previous cases yield
    \begin{align*}
        \tau(\gamma_{x_0,y_0}(t),\gamma_{x_1,y_1}(t))&\geq\tau(\gamma_{x_0,y_0}(t),\gamma_{x_0,y_1}(t))+\tau(\gamma_{x_0,y_1}(t),\gamma_{x_1,y_1}(t))\\
        &\geq t\tau(y_0,y_1)+(1-t)\tau(x_0,x_1),
    \end{align*}
    where in the second summand, concavity was applied to the time reversals of the curves $\gamma_{x_0,y_1}$ and $\gamma_{x_1,y_1}$.
\end{proof}

The results of this section can now be summarized by the following theorem:

\begin{thm}[Synthetic Lorentzian Cartan-Hadamard Theorem]\label{C-H}
    Let $X$ be a globally hyperbolic, strongly causal, and locally causally closed regular Lorentzian pre-length space. If $X$ is locally concave and future one-connected, then the following holds:
    \begin{enumerate}[label=(\roman*)]
        \item Every pair of timelike related points $x\ll y$ is connected by a unique geodesic.\label{C-H1}
        \item Geodesics vary continuously with their endpoints.\label{C-H2}
    \end{enumerate}
    Moreover, if $X$ is a Lorentzian length space, we get that
    \begin{enumerate}[label=(\roman*)]
        \setcounter{enumi}{2}
        \item $X$ is globally concave. \label{C-H3}
    \end{enumerate}
\end{thm}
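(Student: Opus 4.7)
The plan is to leverage the results already established in Section 3. Observe first that future one-connectedness includes timelike path-connectedness by definition, so the hypotheses of every preceding result of this section are in force. Existence of a timelike geodesic between any pair of timelike related points in (i) is then immediate from \autoref{geodconnected}. For (ii) we will invoke \autoref{contvargeod}, which requires only the uniqueness from (i) as additional input; and for (iii) we will invoke \autoref{globalconcav} under the additional assumption that $X$ is a Lorentzian length space. Hence the core task is the uniqueness assertion in (i).

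For uniqueness, suppose $\gamma_0,\gamma_1\colon[0,L]\to X$ are two future-directed timelike geodesics joining $x$ to $y$. Future one-connectedness supplies a timelike homotopy $H_0$ between $\gamma_0$ and $\gamma_1$ fixing the common endpoints. Applying \autoref{geodhomotop} to $H_0$ yields a timelike homotopy $H_1$ whose intermediate curves $H_1(s,\cdot)$ are future-directed timelike geodesics from $H_0(s,0)=x$ to $H_0(s,1)=y$, with $H_1(0,\cdot)=\gamma_0$ and $H_1(1,\cdot)=\gamma_1$. The aim is then to show that the set
\begin{equation*}
    A := \{s \in [0,1] \,:\, H_1(s,\cdot) = \gamma_0\}
\end{equation*}
coincides with $[0,1]$, which forces $\gamma_1=\gamma_0$.

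Clearly $0 \in A$, and $A$ is closed by pointwise continuity of $H_1$ in $s$. Openness is the crux and relies on the local uniqueness provided by \autoref{großesThm}: given $s_0 \in A$, apply that theorem to $\gamma_0 = H_1(s_0,\cdot)$ to obtain open neighborhoods $U_0 \ni x$, $U_L \ni y$ and $U \supseteq \gamma_0([0,L])$ such that $\gamma_0$ is the unique timelike geodesic from $x$ to $y$ whose image lies in $U$. By uniform continuity of $H_1$ on the compact square $[0,1]^2$, the image of $H_1(s,\cdot)$ lies in $U$ for every $s$ sufficiently close to $s_0$; since $H_1(s,\cdot)$ is already a timelike geodesic from $x$ to $y$, the uniqueness part of \autoref{großesThm}\ref{großesThma} forces $H_1(s,\cdot) = \gamma_0$. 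Thus $A$ is open, so by connectedness $A=[0,1]$, proving $\gamma_1=\gamma_0$. Once uniqueness is in hand, (ii) is exactly the conclusion of \autoref{contvargeod}, and in the length space setting (iii) is exactly \autoref{globalconcav}.

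The main conceptual hurdle is bridging the gap between the local uniqueness delivered by \autoref{großesThm}, which holds only within a chosen neighborhood of a specified geodesic, and the global uniqueness asserted here. Future one-connectedness is precisely the ingredient that bridges this gap: it replaces a hypothetical pair of distinct geodesics by a continuous one-parameter family of geodesics with shared endpoints, along which the local uniqueness can be propagated by a standard open-closed argument. Everything else is a matter of stitching together previously established results.
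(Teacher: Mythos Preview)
Your proposal is correct and follows essentially the same route as the paper's proof: existence via \autoref{geodconnected}, uniqueness by applying \autoref{geodhomotop} to a timelike homotopy and then using the local uniqueness from \autoref{großesThm}, followed by \autoref{contvargeod} and \autoref{globalconcav} for (ii) and (iii). Your open--closed argument on the set $A=\{s:H_1(s,\cdot)=\gamma_0\}$ is simply an explicit version of the paper's one-line contradiction (``each $\gamma_s$ possesses a neighborhood in which it is the only geodesic from $x$ to $y$''); the content is identical.
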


\begin{proof}
    \ref{C-H1}: As the generalized Avez-Seifert Theorem \cite[Thm.\ 3.30]{LLS} is proved only for Lorentzian length spaces, we give a short argument for the existence of such geodesics invoking Proposition \ref{geodhomotop}: First, by timelike path-connectedness, there exists a timelike curve $\gamma$ between $x$ and $y$. Now defining the constant homotopy $H_0(s,t):=\gamma(t)$, Proposition \ref{geodhomotop} implies that $H_0$ is timelike homotopic to a homotopy $H_1$ of $\gamma$, in which all intermediate curves $H_1(s,.)$ are geodesics with the same endpoints as $\gamma$. In particular, $x$ and $y$ can be joined by a timelike geodesic. 
    
    To show uniqueness, assume that there were two distinct timelike geodesics $\gamma_0$ and $\gamma_1$ connecting the same endpoints. By \autoref{geodhomotop}, there exists a timelike path-homotopy $\gamma_s$ between $\gamma_0$ and $\gamma_1$ such that all $\gamma_s$ are geodesics from $x$ to $y$. However, by \autoref{großesThm}, each of these curves possesses a neighborhood in which it is the only geodesic from $x$ to $y$, a contradiction.

    \ref{C-H2}: By the first item, $X$ is uniquely geodesic. Hence, the claim follows from \autoref{contvargeod}.

    \ref{C-H3}: This immediately follows from \autoref{globalconcav}, again using that $X$ is uniquely geodesic by \ref{C-H1}.
\end{proof}

Finally, we will give an application of the previous theorem to globalize curvature bounds from above in the sense of triangle comparison. The classical way to globalize curvature bounds from above requires existence and uniqueness of geodesics, as well as continuity of the geodesic map (cf.\ \cite[2.4.9 Prop.]{bridson-haefliger} for the metric case and \cite[Thm.\ 4.6]{Patchwork} for the Lorentzian case). Since this is exactly the conclusion of \autoref{C-H}, we are able to prove the following: 

\begin{cor}
    Let $X$ be a globally hyperbolic, strongly causal, and locally causally closed regular Lorentzian pre-length space such that $\tau$ is continuous. If $X$ has timelike curvature bounded from above by zero and is future one-connected, then $X$ satisfies a global curvature bound from above by zero. 
\end{cor}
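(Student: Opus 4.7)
The plan is to reduce the statement to the patchwork-style globalization theorem for upper timelike curvature bounds, namely \cite[Thm.~4.6]{Patchwork}, which takes as input a uniquely geodesic regular Lorentzian pre-length space in which geodesics vary continuously together with a local upper curvature bound, and returns a global upper curvature bound of the same value. Our task is therefore essentially bookkeeping: verify that \autoref{C-H} applies so that the hypotheses of the Patchwork globalization are met.

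First, I would invoke \autoref{felixprop} to translate the assumption of timelike curvature bounded from above by $\kappa \geq 0$ into local concavity of $\tau$ in our sense. The remaining hypotheses of the synthetic Lorentzian Cartan-Hadamard Theorem \autoref{C-H} are precisely the standing regularity assumptions (global hyperbolicity, strong causality, local causal closedness, regularity) together with future one-connectedness, all of which are already imposed. Timelike path-connectedness, which is needed implicitly via \autoref{geodconnected} and \autoref{geodhomotop}, is a consequence of future one-connectedness together with the existence of at least one timelike curve between any pair of timelike related points (which in turn follows from local concavity, since comparison neighborhoods contain timelike maximizers between any pair of timelike related points in them, and timelike related points can be chained through such neighborhoods along any causal curve realizing the time separation). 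Applying \autoref{C-H}\ref{C-H1} and \ref{C-H2}, I obtain that every pair of timelike related points in $X$ is connected by a unique timelike geodesic and that these geodesics vary continuously with their endpoints.

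With these structural features in place, I then invoke \cite[Thm.~4.6]{Patchwork}, which is tailored exactly to this situation: starting from local timelike triangle comparison with upper bound $\kappa$ and unique, continuously varying geodesics, one can piece together comparison triangles in the model space $M_\kappa$ to obtain global timelike triangle comparison for arbitrary timelike triangles in $X$. Since $\tau$ is assumed continuous globally, the comparison construction is unobstructed in the required size range for $M_\kappa$ (recall $\kappa \geq 0$, so the diameter restriction in $M_\kappa$ is not an issue for timelike triangles).

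The main technical obstacle I anticipate is confirming that the hypotheses of \cite[Thm.~4.6]{Patchwork} match precisely the conclusions we extract from \autoref{C-H} — in particular, checking that the version of unique geodesic connectedness and continuous dependence used in that patchwork result coincides with what \autoref{C-H} provides (as opposed to, e.g., requiring unique geodesics also between causally but not timelike related points, which is a situation we deliberately excluded from \autoref{def:Concavity}). Modulo this cross-reference check, which the continuity of $\tau$ and the regularity assumptions should resolve cleanly, the proof is a one-line deduction and can be written as such.
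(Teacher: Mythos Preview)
Your proposal is correct and follows essentially the same route as the paper: use \autoref{felixprop} to pass from the upper curvature bound to local concavity, apply \autoref{C-H} to obtain unique continuously varying timelike geodesics, and then invoke \cite[Thm.~4.6]{Patchwork}. Two minor remarks: (a) you do not need your ad hoc argument for timelike path-connectedness, since in this paper it is built into the definition of future one-connectedness; (b) the specific cross-reference issue the paper singles out is that \cite[Thm.~4.6]{Patchwork} assumes $X$ to be non-timelike locally isolating, which follows from strong causality together with timelike path-connectedness.
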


\begin{proof}
    As the curvature bound implies local concavity, cf.\ \autoref{felixprop}, we can invoke \autoref{C-H} to conclude that there exist unique timelike geodesics between timelike related points and these geodesics vary continuously. Hence, all assumptions of \cite[Thm.\ 4.6]{Patchwork} are satisfied (note that their assumption of $X$ being non-timelike locally isolating is implied by strong causality and timelike path-connectedness), which immediately yields that $X$ has global curvature bounded above by zero.
\end{proof}

\appendix
\section{Appendix}
\begin{lem}\label{problem}
    Let $A_0<A_1<\dots<A_N$ be real numbers. We define sequences by recursion as follows:
    \begin{align*}
        &A_i^0=A_i,\hspace{0.5cm}\text{ for all } 0\leq i\leq N,\\
        &A_0^k=A_0,\,A_N^k=A_N,\,A_i^k=\frac{A_{i-1}^{k-1}+A_{i+1}^{k-1}}{2},\hspace{0.5cm}\text{ for all }1\leq i\leq N-1, \, k \ge 1.
    \end{align*}
    Then $A_i^k\to \frac{iA_N+(N-i)A_0}{N}$ as $k \rightarrow \infty$.
\end{lem}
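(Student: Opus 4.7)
The natural candidate limit $L_i := (iA_N + (N-i)A_0)/N$ is itself invariant under the recursion, since linear sequences satisfy $L_i = (L_{i-1}+L_{i+1})/2$, and moreover $L_0 = A_0$, $L_N = A_N$. Setting $B_i^k := A_i^k - L_i$ therefore produces a new family satisfying the same recursion but with \emph{zero} boundary values $B_0^k = B_N^k = 0$, and the claim reduces to $B_i^k \to 0$ for every interior $i$.

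My plan is to interpret $B_i^k$ probabilistically. Let $(X_j)_{j \geq 0}$ denote the simple symmetric random walk on $\{0, 1, \dots, N\}$ absorbed at $0$ and $N$, started from $X_0 = i$. A direct induction on $k$, using the defining recursion at interior sites and the fact that the boundary values of $B^{\,\cdot}$ remain at their initial values, yields the harmonic-measure representation
$$B_i^k = \mathbb{E}_i\bigl[B_{X_k}^0\bigr].$$
Since $B_0^0 = B_N^0 = 0$, any trajectory that has been absorbed by time $k$ contributes nothing to this expectation, so
$$\bigl|B_i^k\bigr| \;\leq\; \Bigl(\max_{0 \leq j \leq N}\bigl|B_j^0\bigr|\Bigr) \cdot \mathbb{P}_i\bigl(X_k \text{ not yet absorbed}\bigr),$$
and the standard gambler's ruin estimate, or equivalently the fact that a finite-state chain restricted to its transient states decays exponentially, gives that the probability on the right tends to $0$ as $k \to \infty$.

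The only real obstacle is justifying the random walk representation cleanly, which amounts to unrolling the recursion while noting that the two boundary sites are absorbing states for the walk. As an alternative, a purely linear-algebraic route is available: writing the interior iteration as $B^k = T B^{k-1}$ for the $(N-1) \times (N-1)$ tridiagonal matrix $T$ with $1/2$ on the sub- and super-diagonals, one diagonalizes $T$ explicitly with eigenvalues $\cos(\pi j / N)$ for $j = 1, \dots, N-1$, all strictly less than $1$ in absolute value; hence $T^k \to 0$ and the claim follows at once. Either route gives the stated convergence with no further work.
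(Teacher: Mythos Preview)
Your proof is correct and uses essentially the same idea as the paper: both interpret the recursion via the simple symmetric random walk on $\{0,\dots,N\}$ with absorbing barriers. The only organizational difference is that the paper computes the absorption probabilities $P_i=i/N$ directly and reads off the limit of $\mathbb{E}(X_i^k)$, whereas you subtract the candidate limit first and only need that the non-absorption probability tends to zero; your linear-algebra alternative via the eigenvalues $\cos(\pi j/N)$ is a genuine bonus not present in the paper.
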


\begin{proof}
    Let $(X_i^k)_{k\in\mathbb N}$ be a Markov Chain on $(A_0,\ldots,A_N)$ given by the transition matrix 
    \[
    P =
    \begin{bmatrix}
    1 & 0 & 0 & 0 & \dots & 0 & 0 & 0  \\
    \frac{1}{2} & 0 & \frac{1}{2} & 0 & \dots & 0 & 0 & 0  \\
    0 & \frac{1}{2} & 0 & \frac{1}{2} & \dots & 0 & 0 & 0  \\
    \vdots & \vdots & \vdots & \vdots &  & \vdots & \vdots & \vdots \\
    0 & 0 & 0 & 0 & \dots & \frac{1}{2} & 0 & \frac{1}{2} \\
    0 & 0 & 0 & 0 & \dots & 0 & 0 &1
    \end{bmatrix}
    \]
    and the inital distribution $\delta_{A_i}$. This is essentially a symmetric random walk with absorbing barriers $A_0$ and $A_N$. We clearly have $A_i^k=\mathbb{E}(X_i^k)$ and the probability that the random walk hits one of the barriers is $1$. However, it is not immediately clear what the probability of it hitting $A_N$ before $A_0$ is. We call this probability $P_i$. Then $P_0=0$ and $P_N=1$, while for $2\leq i\leq N$ we have $P_i=\frac{P_{i-1}+P_{i+1}}{2}$. This final expression is equivalent to 
    \begin{equation*}
    P_{i+1}-P_i=P_i-P_{i-1}=:\Delta.
    \end{equation*}
    We can now write
    \begin{equation*}
        1=P_N=P_N-P_{N-1}+P_{N-1}-P_{N-2}\pm\dots-P_0+P_0=N\Delta
    \end{equation*}
    This implies that $\Delta=1/N$ and $P_i=i/N$. Consequently we get $\mathbb{P}(X_i^k=A_N)\to \frac{i}{N}$, $\mathbb{P}(X_i^k=A_0)\to\frac{N-i}{N}$ and for $2\leq j\leq N$, we have $\mathbb{P}(X_i^k=A_j)\to 0$. Hence, finally,
    \begin{equation*}
        \lim_{k\to\infty}A_i^k=\lim_{k\to\infty}\mathbb{E}(X_i^k)=\sum_{j=0}^N \lim_{k\to\infty}\mathbb{P}(X_i^k=A_j)A_j=\frac{iA_N+(N-i)A_0}{N}.\qedhere
    \end{equation*} 
\end{proof}

\addcontentsline{toc}{section}{Acknowledgements}
\section*{Acknowledgements}
We would like to thank Tobias Beran and Felix Rott for their feedback and valuable input at various stages of this project. This research was supported by the Austrian Science Fund (FWF) [Grants DOI \href{https://doi.org/10.55776/PAT1996423}{10.55776/PAT1996423} and \href{https://doi.org/10.55776/EFP6}{10.55776/EFP6}]. For open access purposes, the authors have applied a CC BY public copyright license to any author-accepted manuscript version arising from this submission.

\addcontentsline{toc}{section}{References}
\bibliography{Synthetic_Lorentzian_Cartan_Hadamard} 

@article {AlexanderBishop,
    AUTHOR = {Alexander, Stephanie B. and Bishop, Richard L.},
     TITLE = {The {H}adamard-{C}artan theorem in locally convex metric
              spaces},
   JOURNAL = {Enseign. Math. (2)},
  FJOURNAL = {L'Enseignement Math\'ematique. Revue Internationale. 2e
              S\'erie},
    VOLUME = {36},
      YEAR = {1990},
    NUMBER = {3-4},
     PAGES = {309--320},
      ISSN = {0013-8584},
   MRCLASS = {53C70 (53C22)},
  MRNUMBER = {1096422},
MRREVIEWER = {Werner\ Ballmann},
}

@book {BallmannLectures,
    AUTHOR = {Ballmann, Werner},
     TITLE = {Lectures on spaces of nonpositive curvature},
    SERIES = {DMV Seminar},
    VOLUME = {25},
      NOTE = {With an appendix by Misha Brin},
 PUBLISHER = {Birkh\"auser Verlag, Basel},
      YEAR = {1995},
     PAGES = {viii+112},
      ISBN = {3-7643-5242-6},
   MRCLASS = {53C21 (58F17)},
  MRNUMBER = {1377265},
MRREVIEWER = {Boris\ Hasselblatt},
       DOI = {10.1007/978-3-0348-9240-7},
       URL = {https://doi.org/10.1007/978-3-0348-9240-7},
}

@incollection {BallmannPaper,
    AUTHOR = {Ballmann, Werner},
     TITLE = {Singular spaces of nonpositive curvature},
 BOOKTITLE = {Sur les groupes hyperboliques d'apr\`es {M}ikhael {G}romov
              ({B}ern, 1988)},
    SERIES = {Progr. Math.},
    VOLUME = {83},
     PAGES = {189--201},
 PUBLISHER = {Birkh\"auser Boston, Boston, MA},
      YEAR = {1990},
      ISBN = {0-8176-3508-4},
   MRCLASS = {53C23 (53C22)},
  MRNUMBER = {1086658},
       DOI = {10.1007/978-1-4684-9167-8\_10},
       URL = {https://doi.org/10.1007/978-1-4684-9167-8_10},
}

@book {BeemEhrlich,
    AUTHOR = {Beem, John K. and Ehrlich, Paul E. and Easley, Kevin L.},
     TITLE = {Global {L}orentzian geometry},
    SERIES = {Monographs and Textbooks in Pure and Applied Mathematics},
    VOLUME = {202},
   EDITION = {Second},
 PUBLISHER = {Marcel Dekker, Inc., New York},
      YEAR = {1996},
     PAGES = {xiv+635},
      ISBN = {0-8247-9324-2},
   MRCLASS = {53C50 (53-02 83-02)},
  MRNUMBER = {1384756},
MRREVIEWER = {Peter\ R.\ Law},
}

@article {curvature,
    AUTHOR = {Beran, Tobias and Kunzinger, Michael and Rott, Felix},
     TITLE = {On curvature bounds in {L}orentzian length spaces},
   JOURNAL = {J. Lond. Math. Soc. (2)},
  FJOURNAL = {Journal of the London Mathematical Society. Second Series},
    VOLUME = {110},
      YEAR = {2024},
    NUMBER = {2},
     PAGES = {Paper No. e12971, 41},
      ISSN = {0024-6107,1469-7750},
   MRCLASS = {53C23 (53B30 53C50)},
  MRNUMBER = {4781260},
       DOI = {10.1112/jlms.12971},
       URL = {https://doi.org/10.1112/jlms.12971},
}

@article {Patchwork,
    AUTHOR = {Beran, Tobias and Napper, Lewis and Rott, Felix},
     TITLE = {Alexandrov's patchwork and the {B}onnet-{M}yers theorem for
              {L}orentzian length spaces},
   JOURNAL = {Trans. Amer. Math. Soc.},
  FJOURNAL = {Transactions of the American Mathematical Society},
    VOLUME = {378},
      YEAR = {2025},
    NUMBER = {4},
     PAGES = {2713--2743},
      ISSN = {0002-9947,1088-6850},
   MRCLASS = {53C50 (51K10 53B30 53C23)},
  MRNUMBER = {4880460},
       DOI = {10.1090/tran/9372},
       URL = {https://doi.org/10.1090/tran/9372},
}

@book {bridson-haefliger,
    AUTHOR = {Bridson, Martin R. and Haefliger, Andr\'e},
     TITLE = {Metric spaces of non-positive curvature},
    SERIES = {Grundlehren der mathematischen Wissenschaften [Fundamental
              Principles of Mathematical Sciences]},
    VOLUME = {319},
 PUBLISHER = {Springer-Verlag, Berlin},
      YEAR = {1999},
     PAGES = {xxii+643},
      ISBN = {3-540-64324-9},
   MRCLASS = {53C23 (20F65 53C70 57M07)},
  MRNUMBER = {1744486},
MRREVIEWER = {Athanase\ Papadopoulos},
       DOI = {10.1007/978-3-662-12494-9},
       URL = {https://doi.org/10.1007/978-3-662-12494-9},
}

@article {LytchakIvanov,
    AUTHOR = {Ivanov, Sergei and Lytchak, Alexander},
     TITLE = {Rigidity of {B}usemann convex {F}insler metrics},
   JOURNAL = {Comment. Math. Helv.},
  FJOURNAL = {Commentarii Mathematici Helvetici. A Journal of the Swiss
              Mathematical Society},
    VOLUME = {94},
      YEAR = {2019},
    NUMBER = {4},
     PAGES = {855--868},
      ISSN = {0010-2571,1420-8946},
   MRCLASS = {53B40 (53C23 53C60)},
  MRNUMBER = {4046007},
MRREVIEWER = {Wei\ Zhao},
       DOI = {10.4171/cmh/476},
       URL = {https://doi.org/10.4171/cmh/476},
}

@article {LLS,
    AUTHOR = {Kunzinger, Michael and S\"amann, Clemens},
     TITLE = {Lorentzian length spaces},
   JOURNAL = {Ann. Global Anal. Geom.},
  FJOURNAL = {Annals of Global Analysis and Geometry},
    VOLUME = {54},
      YEAR = {2018},
    NUMBER = {3},
     PAGES = {399--447},
      ISSN = {0232-704X,1572-9060},
   MRCLASS = {53C23 (53B30 53C50 53C80)},
  MRNUMBER = {3867652},
MRREVIEWER = {Benjam\'in\ Olea},
       DOI = {10.1007/s10455-018-9633-1},
       URL = {https://doi.org/10.1007/s10455-018-9633-1},
}

@book {oneil,
    AUTHOR = {O'Neill, Barrett},
     TITLE = {Semi-{R}iemannian geometry},
    SERIES = {Pure and Applied Mathematics},
    VOLUME = {103},
      NOTE = {With applications to relativity},
 PUBLISHER = {Academic Press, Inc. [Harcourt Brace Jovanovich, Publishers],
              New York},
      YEAR = {1983},
     PAGES = {xiii+468},
      ISBN = {0-12-526740-1},
   MRCLASS = {53-01 (53B30 53C50 83-02)},
  MRNUMBER = {719023},
MRREVIEWER = {N.\ V.\ Mitskevich},
}

@article {MinguzziSuhr,
    AUTHOR = {Minguzzi, E. and Suhr, S.},
     TITLE = {Lorentzian metric spaces and their {G}romov-{H}ausdorff
              convergence},
   JOURNAL = {Lett. Math. Phys.},
  FJOURNAL = {Letters in Mathematical Physics},
    VOLUME = {114},
      YEAR = {2024},
    NUMBER = {3},
     PAGES = {Paper No. 73, 63},
      ISSN = {0377-9017,1573-0530},
   MRCLASS = {53C50 (51F99 83C45)},
  MRNUMBER = {4752400},
MRREVIEWER = {S.\ M. B. Kashani},
       DOI = {10.1007/s11005-024-01813-z},
       URL = {https://doi.org/10.1007/s11005-024-01813-z},
}

@article{BraunMcCann,
    AUTHOR = {Braun, M. and McCann, R. J.},
    TITLE = {Causal convergence conditions through variable timelike Ricci curvature bounds},
    YEAR = {2023},
    DOI = {10.48550/arXiv.2312.17158},
    URL = {https://arxiv.org/abs/2312.17158}
    
}

@article{Olaf,
    AUTHOR = {M{\"u}ller, O.},
    TITLE = {Functors in {L}orentzian geometry -- three variations on a theme},
    YEAR = {2022},
    DOI = {10.48550/arXiv.2205.01617},
    URL = {https://arxiv.org/pdf/2205.01617}
    
}

@inproceedings {GromovI,
 AUTHOR = {Gromov, M.},
 TITLE = {Hyperbolic manifolds, groups and actions},
 BOOKTITLE = {Riemann surfaces and related topics: {P}roceedings of the 1978
 {S}tony {B}rook {C}onference ({S}tate {U}niv. {N}ew {Y}ork,
 {S}tony {B}rook, {N}.{Y}., 1978)},
 SERIES = {Ann. of Math. Stud., No. 97},
 PAGES = {183--213},
 PUBLISHER = {Princeton Univ. Press, Princeton, NJ},
 YEAR = {1981},
 MRCLASS = {53C15 (53C45 58F17)},
 MRNUMBER = {624814},
MRREVIEWER = {Elmer G. Rees},
}

@incollection {GromovII,
 AUTHOR = {Gromov, M.},
 TITLE = {Hyperbolic groups},
 BOOKTITLE = {Essays in group theory},
 SERIES = {Math. Sci. Res. Inst. Publ.},
 VOLUME = {8},
 PAGES = {75--263},
 PUBLISHER = {Springer, New York},
 YEAR = {1987},
 MRCLASS = {20F32 (20F06 20F10 22E40 53C20 57R75 58F17)},
 MRNUMBER = {919829},
MRREVIEWER = {Christopher W. Stark},
 DOI = {10.1007/978-1-4613-9586-7\_3},
 URL = {https://doi.org/10.1007/978-1-4613-9586-7_3},
}
\bibliographystyle{aomalpha}
\end{document}